\newcommand{\fs}{\lambda_1} %
\newcommand{\nfs}{\lambda_2} %
\newcommand{\fd}{\mu_2} %
\newcommand{\nfd}{\mu_1} %
\newcommand{\sr}{\theta_s} %
\newcommand{\dr}{\theta_d} %
\newcommand{\fsprob}{\gamma_s} %
\newcommand{\fdprob}{\gamma_d} %
\newcommand{\1}[1]{\mathbf{1}_{\{#1\}}} %
\newcommand{\hnu}[1]{\textcolor{black}{#1}}
\newcommand{\fcu}[1]{\textcolor{black}{#1}}
\newcommand{\cyu}[1]{\textcolor{black}{#1}}
\newcommand{\gettikzxy}[3]{%
  \tikz@scan@one@point\pgfutil@firstofone#1\relax
  \edef#2{\the\pgf@x}%
  \edef#3{\the\pgf@y}%
}
\newcommand*{\affaddr}[1]{#1} %
\newcommand*{\affmark}[1][*]{\textsuperscript{#1}}
\newcommand{\revision}[1]{\color{black}{#1}}
\journalname{Queueing Systems}
\begin{document}

\title{Matching Queues with Reneging: a Product Form Solution}

\titlerunning{Matching Queues with Reneging}        %

\author{Francisco Castro\affmark[*] \and Hamid Nazerzadeh\affmark[*] \and Chiwei Yan\affmark[*]}

\authorrunning{Castro, Nazerzadeh, and Yan} %

\institute{Francisco Castro \at
              Anderson School of Management \\
              University of California, Los Angeles\\
              \email{francisco.castro@anderson.ucla.edu}           %
           \and
           Hamid Nazerzadeh \at
           Marshall School of Business \\
           University of Southern California \\
           \email{nazerzad@usc.edu}
           \and
           Chiwei Yan \at
           Department of Industrial and Systems Engineering \\
           University of Washington, Seattle\\
           \email{chiwei@uw.edu}\\[2mm]
           \affaddr{\affmark[*]This work is done when the authors are with Uber Technologies, Inc.}
}

\date{\empty}%

\maketitle

\vspace{-5em}
\begin{abstract}
Motivated by growing applications in two-sided markets, we study a parallel matching queue with reneging. Demand and supply units arrive to the system and are matched in an FCFS manner according to a compatibility graph specified by an N-system. If they cannot be matched upon arrival, they queue and may abandon the system  as time goes by. We derive explicit product forms of the steady state distributions of this system by identifying a partial balance condition.

\keywords{Matching queue \and Product form solution \and Reneging \and N-system \and Abandonment}
\end{abstract}

\section{Introduction}\label{intro}
We consider a parallel matching queue where supply and demand arrive randomly over time. Supply and demand units are matched, in an FCFS manner, according to the compatibility graph depicted in Figure \ref{fig:n-system-intro}. Our model is a variation of the widely studied N-system \citep{green1985queueing,adan2009exact,visschers2012product,adan2018fcfs,zhan2018many}. There are two types of supply, {\em flexible}, indexed by 1, and {\em inflexible}, indexed by 2. Correspondingly, we refer to the demand type that can only be served by the flexible supply as type 1, and the demand units that can be served by both flexible and inflexible supply as type 2. A key feature of our model is that the supply and demand can be impatient. More specifically, we analyze two systems. In the first system, demand units leave if they are not matched upon arrival. In the second system, demand units may queue---namely, if there is no supply waiting upon their arrival to the system, demand units start queuing and are matched to a compatible supply as soon as one becomes available. In both of these systems, if not immediately matched, the supply joins a queue, but would abandon the system at rate $\sr$. In the second system, the demand queues if not matched immediately but abandons at rate $\dr$.

\begin{figure}[htbp]
\centering
\scalebox{0.7}{\begin{tikzpicture}[baseline=0pt]

\node[rotate=90] at (-10.5,3){\Large \textbf{Supply Side}};
\draw[line width=0.6mm] (-7,4.5)--(-4,4.5);
\draw[line width=0.6mm] (-7,3.5)--(-4,3.5);
\draw[line width=0.6mm] (-4,3.5)--(-4,4.5);
\draw[line width=0.6mm,-] (-3.6,4.0)--(-1,4.0);
\draw[line width=0.6mm,-] (-3.6,4.0)--(-1,2.0);
\draw[line width=0.6mm,-] (-3.6,2.0)--(-1.0,2.0);

\draw[line width=0.6mm,->] (-8.5,4.0)--(-7.5,4.0) node at(-10.25+1,4) {\Large $\lambda_1$};
\draw[line width=0.6mm,->] (-8.5,2)--(-7.5,2) node at(-10.25+1,2) {\Large $\lambda_2$};
\draw[line width=0.6mm,->] (-5.5,4.0)--(-5.5,3.0) node at (-5.15,3.0) {\Large $\sr$};
\draw[line width=0.6mm,->] (-5.5,2.0)--(-5.5,1.0) node at (-5.15,1.0) {\Large $\sr$};

\def\sf{-2}
\draw[line width=0.6mm] (-7,4.5+\sf)--(-4,4.5+\sf);
\draw[line width=0.6mm] (-7,3.5+\sf)--(-4,3.5+\sf);
\draw[line width=0.6mm] (-4,3.5+\sf)--(-4,4.5+\sf);



\node[rotate=90] at (6,3){\Large \textbf{Demand Side}};
\def\sf{-2}
\def\sg{4.5}
\draw[line width=0.6mm] (7-\sg,4.5+\sf)--(4-\sg,4.5+\sf);
\draw[line width=0.6mm] (7-\sg,3.5+\sf)--(4-\sg,3.5+\sf);
\draw[line width=0.6mm] (4-\sg,3.5+\sf)--(4-\sg,4.5+\sf);

\draw[line width=0.6mm] (7-\sg,4.5)--(4-\sg,4.5);
\draw[line width=0.6mm] (7-\sg,3.5)--(4-\sg,3.5);
\draw[line width=0.6mm] (4-\sg,3.5)--(4-\sg,4.5);

\def\sf{2}
\draw[line width=0.6mm,->] (2+\sf,2.0)--(1+\sf,2.0) node at (3.0+\sf,2) {\Large $\mu_2$};
\draw[line width=0.6mm,->] (2+\sf,4.0)--(1+\sf,4.0) node at (3.0+\sf,4) {\Large $\mu_1$};
\draw[line width=0.6mm,->] (-5.5+6.5,4.0)--(-5.5+6.5,3.0) node at (-5.15+6.5,3.0) {\Large $\dr$};
\draw[line width=0.6mm,->] (-5.5+6.5,2.0)--(-5.5+6.5,1.0) node at (-5.15+6.5,1.0) {\Large $\dr$};

\end{tikzpicture}}
\caption{N-system matching queue.}
\label{fig:n-system-intro}
\end{figure}
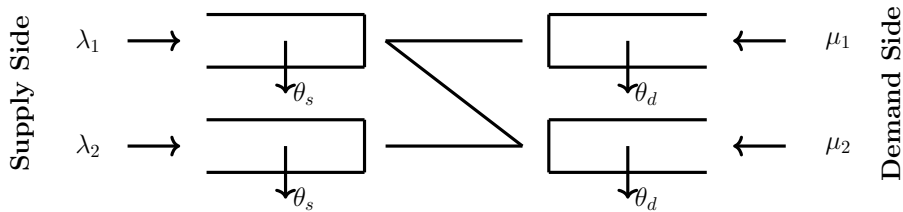

The main contribution of this work is to provide \emph{product form solutions} for the steady state probabilities for the systems above. To the extent of our knowledge, this is the first exact analysis for parallel matching queues with reneging. Our analysis builds on the parsimonious state space representation developed by \cite{adan2009exact}, \cite{visschers2012product} and \cite{adan2014skill}. This representation merges the inflexible and flexible queues into a single queue with two parts: a known part of inflexible agents at the beginning of the queue, and an unknown part of mixed agents at the tail of the queue. As we discuss in Section~\ref{sec:imp-reneg}, incorporating reneging introduces challenges to the analysis because the number of agents in the known part impacts the state dynamics of the unknown part. This leads us to a non-trivial product form solution with interdependent terms. 
The main idea behind our solution approach is to identify an appropriate \emph{partial balance} condition that the steady state probabilities would  satisfy and then to leverage this condition to obtain  exact product form solutions.

Our work is motivated in part by growing applications of matching queuing systems in the gig economy, including online job and task marketplaces such as Upwork and TaskRabbit as well as  applications in healthcare such as blood transfusions and organ transplant markets. In all these applications, {compatibility} plays a significant role in the underlying matching process. 
As another example, let us consider an application in ride-sharing. Uber's app for drivers provides a feature often referred to as the ``destination mode,'' in which 
drivers can enter a specific destination.\footnote{See \url{https://www.uber.com/us/en/drive/basics/driver-destinations/}} Then the platform would only send them trip requests from the riders that are going toward the same destination. For a given destination, our model captures drivers who prefer that destination as inflexible supply and drivers with no preferred  destination as flexible supply. Similarly, the riders going to that destination would be considered as type 2 because they can be served by both driver types. Our analysis provides insights on how the rate adoption of this feature (ratio of flexible to inflexible supply) would impact the marketplace.

\paragraph{\bf Related work} In the following, we briefly discuss the lines of research to which our paper makes a contribution.

\medskip
\textbf{Product form.} 
Since the classic work of \cite{jackson1957networks,jackson1963jobshop}, 
researchers have pursued product form solutions for a diverse range of settings such as systems with redundant requests \citep{gardner2016queueing}, infinite bipartite matching system \citep{adan2017reversibility}, and stochastic matching on general graphs \citep{moyal2017product}.  We refer readers to a recent overview by \cite{gardner2020product}.
Closest to our work are papers that study the parallel FCFS queues in the context of production and service systems.
In particular, \cite{adan2009exact} consider 
an allocation policy that randomizes jobs when, upon arrival, they see idle servers. In this context, they develop the first product form solution for the steady state distribution. 
\cite{visschers2012product} consider a generalization of \cite{adan2009exact} with multiple machines and job types and derive a product form solution for the steady state distribution. More recently, \cite{adan2014skill} introduced the FCFS-ALIS---first come first served, assign longest idle server---policy and obtain a product form solution for the steady state probabilities. 
\cite{adan2018fcfs} provide a unified view of several related systems, including the aforementioned FCFS-ALIS parallel queue, a redundancy queue, and an FCFS matching queue. They show that these three systems are closely connected to a directed infinite bipartite matching model and present product form solutions. 
Our work contributes to this literature by providing a product form solution for the stationary probabilities of an FCFS matching queue with reneging, which encompasses a wider range of applications.
Incorporating reneging is non-trivial as it leads to a different kind of product form which cannot be written as a product of independent terms, as discussed in Section \ref{sec:one-sided}.

\medskip
\textbf{Reneging.} 
\cite{kaplan1988public} presented the first fluid approximation of an overloaded queue with reneging, with application in public housing assignment. \cite{talreja2008fluid} strengthened the analysis by proposing fluid models under different matching graphs. Motivated by organ transplant applications, \cite{zenios1999modeling} studies a multi-class system in which patients may renege, whereas organs do not line up in the system. He provides asymptotic performance measures for a randomized allocation policy that effectively decouples the system into multiple independent M/M/1 queues with reneging.  \cite{boxma2011new}
proposes a double-sided queue to investigate the organ allocation problem, in which both patients and organs may abandon. In related work,  \cite{afeche2014double} consider a double-sided queue with reneging, but in which arrivals come in batches. Using level-crossing techniques, these works derive expressions for steady state performance metrics. In contrast, our focus is to obtain an \emph{exact} analysis of the steady state behavior of systems with reneging.

\medskip
\paragraph{\bf Organization} The remainder of the paper is organized as follows. In Section \ref{sec:one-sided}, we first analyze the one-sided system in which only supply queues and demand leaves the system if they cannot be matched immediately upon arrival. We then, in Section \ref{sec:two-sided}, extend the analysis to the two-sided setting in which both demand and supply queue. We conclude in Section \ref{sec:conclusion}.
 
\section{One-sided System}\label{sec:one-sided}

In this section, we analyze a one-sided matching queuing system, under FCFS policy, where demand arrivals have zero patience and leave the system if they cannot be matched immediately. Supply queues up in the system and each unit stays in the system for an i.i.d. and exponentially distributed time with rate $\sr$. 
The arrival of each type of supply and demand follows a Poisson process, with rates denoted by $\fs$ ($\fd$) and $\nfs$ ($\nfd$). Recall that flexible supply, indexed by 1, can be matched to either type 1 or type 2 demand, and inflexible supply, indexed by 2, can be matched only to type 2 demand.
We use abandonment and reneging interchangeably in the rest of the paper.

\subsection{State Space Representation}
\label{subsec:state_space_representation}

A natural way to model the state of the system is to keep track of all supply arrivals in the order of their arrival times. However, this state representation is exponential in size. In this paper, similar to \cite{adan2009exact} and \cite{visschers2012product}, we consider an alternative parsimonious representation: \cyu{we arrange different types of supply into a single queue according to their arrival times, and then partition them into two parts: a \emph{known} part with inflexible supply at the head of the queue and an \emph{unknown} part with a mixed types of supply at the tail of the queue}. The state space is $\mathfrak{s}=\left\{(m,n) \mid m,n\in\mathbb{Z}_{\ge0}\right\}$ where $m$ is the number of known inflexible supplies at the beginning of the queue, and $n$ is the remaining number of supplies with unknown type, i.e. they could be either flexible or inflexible. 
At any state $(m,n)$, an arrival of flexible demand will consume a supply at the beginning of the queue, if there is any; an arrival of inflexible demand has to go through an independent sequence of trials to scan unknown supply in their order of arrival until finding a flexible one. Note that the latter occurs with probability $\gamma_s=\lambda_1/(\lambda_1 + \lambda_2)$, that is, $\gamma_s$ is the probability of a supply arrival being flexible. The scanned inflexible supply will be added into the known part at the beginning of the queue. The state transitions are depicted in Figure \ref{fig:one-sided-state-diag}. \hnu{Similar to  \cite{visschers2012product}, it is easy to observe that this state space representation is Markovian.}
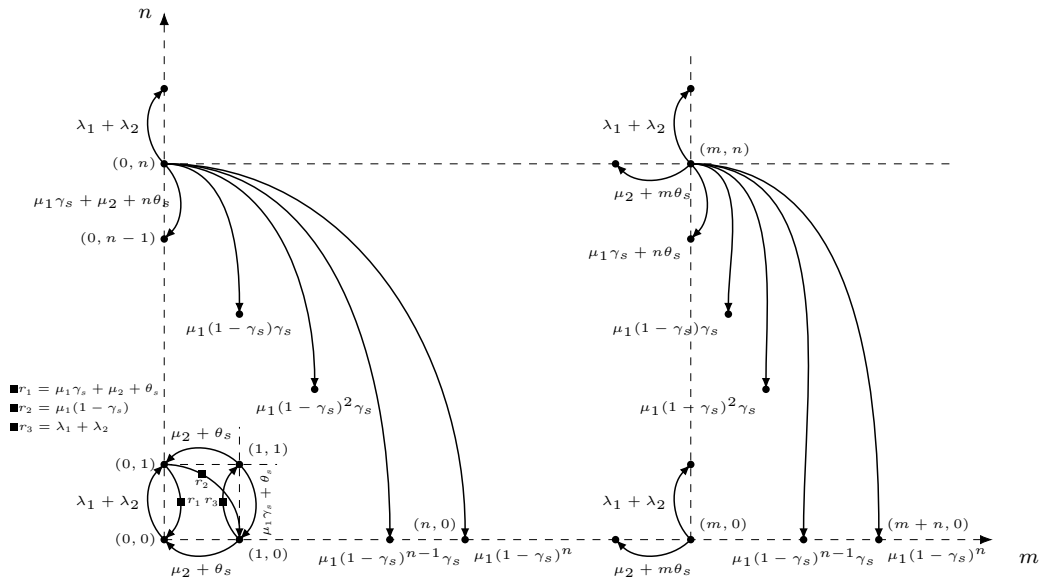
\begin{figure}[h]
\centering
\begin{tikzpicture}[baseline=0pt]


\coordinate[label =  left: \text{\tiny $(0,0)$}] (A) at (0,0);
\coordinate[label =  below right: \text{\tiny $(1,0)$}] (B) at (1,0);
\coordinate[label =  left: \text{\tiny $(0,1)$}] (C) at (0,1);
\coordinate[label =  above right: \text{\tiny $(1,1)$}] (D) at (1,1);
\node[rotate=90,font=\fontsize{4.5}{3.75}\selectfont] at (1.35,0.5) { $\mu_1\gamma_s+\sr$};

\node at (A)[circle,fill,inner sep=1pt]{};
\node at (B)[circle,fill,inner sep=1pt]{};
\node at (C)[circle,fill,inner sep=1pt]{};
\node at (D)[circle,fill,inner sep=1pt]{};

\node at (0.22,0.5)[fill,inner sep=1.5pt]{};
\node[font=\fontsize{4.5}{3.75}\selectfont] at (0.4,0.5){ $r_1$};
\node at (-2,2)[fill,inner sep=1.5pt]{};
\node[font=\fontsize{4.5}{3.75}\selectfont] at (-1,2){ $r_1=\mu_1\gamma_s+\mu_2 +\sr$};

\node at (0.5,0.87)[fill,inner sep=1.5pt]{};
\node[font=\fontsize{4.5}{3.75}\selectfont] at (0.5,0.74){ $r_2$};
\node at (-2,1.75)[fill,inner sep=1.5pt]{};
\node[font=\fontsize{4.5}{3.75}\selectfont] at (-1.2,1.75){ $r_2=\mu_1(1-\gamma_s)$};

\node at (0.78,0.5)[fill,inner sep=1.5pt]{};
\node[font=\fontsize{4.5}{3.75}\selectfont] at (0.63,0.5){ $r_3$};
\node at (-2,1.5)[fill,inner sep=1.5pt]{};
\node[font=\fontsize{4.5}{3.75}\selectfont] at (-1.33,1.5){ $r_3=\lambda_1+\lambda_2$};

\draw[line width=0.2mm,-{Latex[length=1.5mm]}](D) to [out=135,in=45] node[above]{\tiny $\mu_2 +\sr$} (C);

\draw[line width=0.2mm,-{Latex[length=1.5mm]}](A) to [out=135,in=225] node[left]{\tiny $\fs+\nfs$} (C);

\draw[line width=0.2mm,-{Latex[length=1.5mm]}](C) to [out=-45,in=45] node[right]{} (A);

\draw[line width=0.2mm,-{Latex[length=1.5mm]}](C) to [out=0,in=90] node[left]{} (B);

\draw[line width=0.2mm,-{Latex[length=1.5mm]}](B) to [out=225,in=-45] node[below]{\tiny $\fd+\sr$} (A);

\draw[line width=0.2mm,-{Latex[length=1.5mm]}](D) to [out=-45,in=45] node[left]{} (B);
\draw[line width=0.2mm,-{Latex[length=1.5mm]}](B) to [out=135,in=225] node[left]{} (D);

\coordinate[label =  above right: \text{\tiny $(m,0)$}] (A1) at (7,0);
\node at (A1)[circle,fill,inner sep=1pt]{};
\coordinate[] (A2) at (6,0);
\node at (A2)[circle,fill,inner sep=1pt]{};
\coordinate[] (A3) at (7,1);
\node at (A3)[circle,fill,inner sep=1pt]{};

\draw[line width=0.2mm,-{Latex[length=1.5mm]}](A1) to [out=225,in=-45] node[below]{\tiny $\fd+m\sr$} (A2);
\draw[line width=0.2mm,-{Latex[length=1.5mm]}](A1) to [out=135,in=225] node[left]{\tiny $\fs+\nfs$} (A3);

\coordinate[] (B0) at (0,6);
\node at (B0)[circle,fill,inner sep=1pt]{};
\coordinate[label =  left: \text{\tiny $(0,n)$}] (B1) at (0,5);
\node at (B1)[circle,fill,inner sep=1pt]{};
\coordinate[label =  left: \text{\revision{\tiny $(0,n-1)$}}] (B2) at (0,4);
\coordinate[] (B2) at (0,4);
\node at (B2)[circle,fill,inner sep=1pt]{};
\coordinate[] (B3) at (1,3);
\node at (B3)[circle,fill,inner sep=1pt]{};
\coordinate[] (B4) at (2,2);
\node at (B4)[circle,fill,inner sep=1pt]{};
\coordinate[] (B5) at (3,0);
\node at (B5)[circle,fill,inner sep=1pt]{};
\coordinate[label =  above left: \text{\revision {\tiny $(n,0)$}}] (B6) at (4,0);
\coordinate[] (B6) at (4,0);
\node at (B6)[circle,fill,inner sep=1pt]{};

\draw[line width=0.2mm,-{Latex[length=1.5mm]}](B1) to [out=135,in=225] node[left]{\tiny $\fs+\nfs$} (B0);
\draw[line width=0.2mm,-{Latex[length=1.5mm]}](B1) to [out=-45,in=45] node[left]{\tiny $\nfd\fsprob+\fd + n\sr$} (B2);
\draw[line width=0.2mm,-{Latex[length=1.5mm]}](B1) to [out=0,in=90] (B3)node[below]{\tiny $\nfd(1-\fsprob)\fsprob$};
\draw[line width=0.2mm,-{Latex[length=1.5mm]}](B1) to [out=0,in=90] (B4)node[below]{\tiny $\nfd(1-\fsprob)^2\fsprob$};
\draw[line width=0.2mm,-{Latex[length=1.5mm]}](B1) to [out=0,in=90]  (B5)node[below]{\tiny $\nfd(1-\fsprob)^{n-1}\fsprob$};
\draw[line width=0.2mm,-{Latex[length=1.5mm]}](B1) to [out=0,in=90] (B6)node[below right]{\tiny $\nfd(1-\fsprob)^n$};

\coordinate[] (C0) at (7,6);
\node at (C0)[circle,fill,inner sep=1pt]{};
\coordinate[label =  above right : \text{\tiny $(m,n)$}] (C1) at (7,5);
\node at (C1)[circle,fill,inner sep=1pt]{};
\coordinate[] (C2) at (6,5);
\node at (C2)[circle,fill,inner sep=1pt]{};
\coordinate[] (C3) at (7,4);
\node at (C3)[circle,fill,inner sep=1pt]{};
\coordinate[] (C4) at (7.5,3);
\node at (C4)[circle,fill,inner sep=1pt]{};
\coordinate[] (C5) at (8,2);
\node at (C5)[circle,fill,inner sep=1pt]{};
\coordinate[] (C6) at (8.5,0);
\node at (C6)[circle,fill,inner sep=1pt]{};
\coordinate[label =  above right: \text{\revision{\tiny $(m+n,0)$}}] (C7) at (9.5,0);
\coordinate[] (C7) at (9.5,0);
\node at (C7)[circle,fill,inner sep=1pt]{};

\draw[line width=0.2mm,-{Latex[length=1.5mm]}](C1) to [out=135,in=225] node[left]{\tiny $\fs+\nfs$} (C0);
\draw[line width=0.2mm,-{Latex[length=1.5mm]}](C1) to [out=-135,in=-45] node[below]{\tiny $\fd+m\sr$} (C2);
\draw[line width=0.2mm,-{Latex[length=1.5mm]}](C1) to [out=-45,in=45]  (C3)node[below left ]{\tiny $\nfd\fsprob+n\sr$};

\draw[line width=0.2mm,-{Latex[length=1.5mm]}](C1) to [out=0,in=90] (C4)node[below left ]{\tiny $\nfd(1-\fsprob)\fsprob$};
\draw[line width=0.2mm,-{Latex[length=1.5mm]}](C1) to [out=0,in=90] (C5)node[below left ]{\tiny $\nfd(1-\fsprob)^2\fsprob$};
\draw[line width=0.2mm,-{Latex[length=1.5mm]}](C1) to [out=0,in=90]  (C6)node[below]{\tiny $\nfd(1-\fsprob)^{n-1}\fsprob$};
\draw[line width=0.2mm,-{Latex[length=1.5mm]}](C1) to [out=0,in=90] (C7)node[below right]{\tiny $\nfd(1-\fsprob)^n$};

\draw[-{Latex[length=1.5mm]},dashed,line width=0.05mm](0,0)--(11,0);
\node at (11.5,-0.25){ $m$};
\draw[-{Latex[length=1.5mm]},dashed,line width=0.05mm](0,0)--(0,7);
\node at (-0.25,7){$n$};

\draw[dashed,line width=0.05mm](0,5)--(10.5,5);
\draw[dashed,line width=0.05mm](7,0)--(7,6.5);

\draw[dashed,line width=0.05mm](0,1)--(1.5,1);
\draw[dashed,line width=0.05mm](1,0)--(1,1.5);

\end{tikzpicture}
\caption{Transition diagram for one-sided system.}
\label{fig:one-sided-state-diag}
\end{figure}

We use $\pi_{m,n}$ to denote the stationary probability of state $(m,n)\in \mathfrak{s}$. These probabilities must satisfy the following set of equilibrium equations. 
\fcu{There are four different cases corresponding to four different regions of the state space; we illustrate each case in Figure \ref{fig:one-sided-state-diag}.} For completeness, we present the steady-state equilibrium equations below. 

\begin{align}\label{eq:NB-st-1}
\nonumber
 \pi_{m,n}\left(\mu_1+\mu_2+\lambda_1+\lambda_2 + (m + n)\theta_s\right)
&=\pi_{m,n+1}(n+1)\theta_s +\pi_{m+1,n}(\mu_2+(m+1)\theta_s)+\pi_{m,n-1}(\lambda_1+\lambda_2)\\
&+\sum_{k=0}^m\pi_{m-k,n+k+1}\mu_1\gamma_s(1-\gamma_s)^k,\quad m\ge 1, n\ge 1;\\ \nonumber \\
\label{eq:NB-st-2} \nonumber 
\pi_{m,0}(\mu_2+\lambda_1+\lambda_2 + m\theta_s)&=\pi_{m,1}\theta_s + \pi_{m+1,0}\left(\mu_2 + \theta_s(m + 1)\right) \\
&+ \sum_{k=0}^m\pi_{m-k,k+1}\mu_1\gamma_s(1-\gamma_s)^k +\sum_{k=1}^{m}\pi_{m-k,k}\mu_1(1-\gamma_s)^k, \quad  m\ge1; \\ \nonumber \\
\nonumber
\pi_{0,n}(\mu_1+\mu_2+\lambda_1+\lambda_2  + n\theta_s) &=\pi_{0,n+1}\left((n+1)\theta_s + \mu_1\gamma_s + \mu_2\right) +\pi_{1,n}(\mu_2 + \theta_s)\\\label{eq:NB-st-3}
&+ \pi_{0,n-1}(\lambda_1+\lambda_2),\quad n\ge 1; \\ \nonumber \\ \label{eq:NB-st-4}
\pi_{0,0}(\lambda_1+\lambda_2 ) &=
\pi_{0,1}(\theta_s + \mu_1\gamma_s + \mu_2) + \pi_{1,0}(\mu_2 + \theta_s). \\ \nonumber
\end{align}

\subsection{The System without Reneging}

We start our analysis by first looking at the system without reneging ($\sr=0$). The proof of the following proposition is relegated to the appendix.

\begin{proposition}\label{prop:no-reneg}
Suppose there is no reneging in the N-system matching queue and that the following stability conditions are satisfied:
\begin{equation*}
    \lambda_1+\lambda_2<\mu_1+\mu_2\quad \text{and}\quad
    \
    \lambda_2<\mu_2.
\end{equation*}
Then the {\revision system is ergodic and its} steady state probabilities are given by 
\begin{equation*}
\pi_{m,n}=\left\{\begin{array}{ll}
\frac{\mu_1}{\mu_1+\mu_2}\left(\frac{\lambda_2}{\mu_2}\right)^m\left(\frac{\lambda_1+\lambda_2}{\mu_1+\mu_2}\right)^n B, & ~~ m\ge1; \\
\left(\frac{\lambda_1+\lambda_2}{\mu_1+\mu_2}\right)^nB, & ~~ m=0, \end{array} \right.
\end{equation*}
where $B$ is a normalizing constant which corresponds to $\pi_{0,0}$ --- the steady state probability of the system being empty. {\revision Here $B$ has an explicit form of $(\mu_1+\mu_2-\lambda_1-\lambda_2)(\mu_2-\lambda_2)/\left((\mu_1+\mu_2-\lambda_2)\mu_2\right)$.}
\end{proposition}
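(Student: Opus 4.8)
The plan is a direct verification. I take the displayed family $\{\pi_{m,n}\}$ as an ansatz, check that—specializing to $\theta_s=0$—it solves the balance equations \eqref{eq:NB-st-1}--\eqref{eq:NB-st-4}, check that it is summable exactly under the two stated inequalities, and then conclude ergodicity from the standard fact that a summable nonnegative solution of the balance equations of an irreducible, non-explosive continuous-time chain is, after normalization, its unique stationary distribution, which in particular forces positive recurrence.

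It helps to abbreviate $\rho=(\lambda_1+\lambda_2)/(\mu_1+\mu_2)$, $\sigma=\lambda_2/\mu_2$, and $c=\mu_1/(\mu_1+\mu_2)$, so that the ansatz reads $\pi_{0,n}=\rho^n B$ and $\pi_{m,n}=c\,\sigma^m\rho^n B$ for $m\ge 1$. Two elementary identities do all the work. First, since $\gamma_s=\lambda_1/(\lambda_1+\lambda_2)$, one has $\mu_1\gamma_s\rho=c\,\lambda_1$ and $\rho(\mu_1\gamma_s+\mu_2)+c\,\lambda_2=\lambda_1+\lambda_2$. Second—and this is the crux—$\rho(1-\gamma_s)/\sigma=\mu_2/(\mu_1+\mu_2)=1-c$. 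The second identity says that the successive terms of the scanning sum $\sum_{k}\pi_{m-k,n+k+1}\mu_1\gamma_s(1-\gamma_s)^k$, once $\pi_{m,n}$ is factored out, form a finite geometric progression with ratio $1-c$; the only subtlety is the top term $k=m$, at which $\pi_{0,\cdot}$ lacks the prefactor $c$. That boundary term is precisely what is needed for the collapse $c\sum_{k=0}^{m-1}(1-c)^k+(1-c)^m=1$, and likewise $c\sum_{k=1}^{m-1}(1-c)^k+(1-c)^m=1-c$ disposes of the ``failed-scan'' sum $\sum_{k}\pi_{m-k,k}\mu_1(1-\gamma_s)^k$ that appears in \eqref{eq:NB-st-2}. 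Granting these collapses, \eqref{eq:NB-st-1} and \eqref{eq:NB-st-2} reduce to the first pair of identities above, while \eqref{eq:NB-st-3} and \eqref{eq:NB-st-4}, after dividing through by $\rho^n B$ and by $B$ respectively, reduce to $\rho(\mu_1\gamma_s+\mu_2)+c\,\lambda_2=\lambda_1+\lambda_2$. Thus each of the four equation families becomes a routine polynomial identity.

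For summability and the constant, observe that $\sum_{m,n}\pi_{m,n}$ converges iff $\rho<1$ and $\sigma<1$, i.e. iff $\lambda_1+\lambda_2<\mu_1+\mu_2$ and $\lambda_2<\mu_2$; in that case a geometric sum gives $\sum_{m,n}\pi_{m,n}=B(1-\rho)^{-1}(1+c\sigma/(1-\sigma))$, and the bracket simplifies to $\mu_2(\mu_1+\mu_2-\lambda_2)/((\mu_1+\mu_2)(\mu_2-\lambda_2))$. Setting the total mass equal to $1$ then yields $B=(\mu_1+\mu_2-\lambda_1-\lambda_2)(\mu_2-\lambda_2)/((\mu_1+\mu_2-\lambda_2)\mu_2)$, which is strictly positive under the stability conditions and equals $\pi_{0,0}$.

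Finally, the chain on $\mathfrak{s}$ is irreducible (from any $(m,n)$, repeated type-$2$ demand arrivals lead to $(0,n)$ and then to $(0,0)$; conversely a single type-$1$ demand arrival that scans $m$ inflexible supplies followed by a flexible one sends $(0,m+1)$ to $(m,0)$, and supply arrivals reach every $(m,n)$ from there) and, with $\theta_s=0$, has uniformly bounded transition rates, hence is non-explosive. A summable nonnegative solution of the balance equations therefore normalizes to the unique stationary distribution, and the chain is ergodic. The only genuine obstacle throughout is the bookkeeping of the scanning sums—correctly isolating the $m=0$ boundary term and recognizing the $1-c$ geometric collapse; once that is in hand, everything else is linear algebra and geometric series.
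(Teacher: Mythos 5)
Your proof is correct and follows essentially the same route as the paper: both verify the ansatz against equations \eqref{eq:NB-st-1}--\eqref{eq:NB-st-4} by collapsing the two scanning sums into geometric progressions with ratio $\mu_2/(\mu_1+\mu_2)$ (your $1-c$), handling the $k=m$ boundary term where the prefactor $c$ is absent, and then normalizing to get the stated $B$. The only difference is cosmetic: for ergodicity the paper cites Proposition 6 of \cite{visschers2012product}, whereas you give a short self-contained argument via irreducibility and non-explosiveness, which is equally valid.
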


\medskip
We note that similar results have been established in previous literature. In the parallel FCFS system, \cite{adan2009exact} obtain a quite similar product form under a specific assignment probability when arriving jobs see idle servers; \cite{adan2014skill} also derive a related product form solution under the ALIS policy.
{\revision The stability conditions are crucial to make sure that the underlying Markov process is positive recurrent and failing the conditions intuitively leads to  growing queue lengths without bound over time. Note here that stability is possible only because the demand side arrivals does not queue. In a two-sided system as we will discuss in Section \ref{sec:two-sided}, no stability conditions exists (see e.g., Theorem 1 in \cite{mairesse2016stability} for an instability result of stochastic matching under bipartite graphs). Thus introducing reneging is one of the ways to stabilize the system.}
In the next section we make use of this result to provide some insights on the impact of reneging.

\subsection{The Impact of Reneging}\label{sec:imp-reneg}

\fcu{In this section we briefly discuss to what extent reneging impacts the analysis of a product form solution for our system and the main challenges that emerge.}

{\revision One immediate consequence of introducing reneging is that it stabilizes the system as long as reneging rate is positive. This is because the total reneging rate $\theta_s(m+n)$ will exceed any fixed supply arrival rate as queue lengths increase which makes the Markov process positive recurrent.} \fcu{We know that the steady state probabilities in Proposition \ref{prop:no-reneg} would not hold with reneging. The reason is that with reneging, the rate at which the system transitions depends on the number of supplies in the system.}
\fcu{One idea is to examine if the structure of the steady state probabilities for the classic M/M/1+M queue can somewhat be translated to our setting.} For this system it is well known that the steady state probabilities are of the form as $\prod_{i=1}^n \frac{\lambda}{\mu+i\theta}$ for some arrival and service rates $\lambda,\mu$ and reneging rate $\theta$. Given this, a natural way to extend the solution in Proposition
\ref{prop:no-reneg} to the setting with reneging is to consider a solution of the form 
\begin{equation}\label{eq:prod-fail}
\pi_{m,n}\propto \Big(\prod_{i=1}^{m} x_i\Big)\Big(\prod_{i=1}^{n}y_i\Big),
\end{equation}
where $x_i$ and $y_i$ 
must be found using equations \eqref{eq:NB-st-1} to \eqref{eq:NB-st-4}. \fcu{Here the different values of $x_i$ and $y_i$ account for the reneging rate at different levels of supply in the known and unknown parts of the queue.}
However, there is an immediate problem that arises when considering solutions given by \eqref{eq:prod-fail}. To see this, note that the main challenge in solving the system of equations \eqref{eq:NB-st-1} to 
\eqref{eq:NB-st-4} consists in dealing with the terms
\begin{equation}\label{eq:key-terms}
    \sum_{k=0}^m\pi_{m-k,n+k+1}(1-\gamma_s)^k
    \quad \text{and}\quad 
    \sum_{k=1}^{m}\pi_{m-k,k}(1-\gamma_s)^k.
\end{equation}
\fcu{At a first glance there is no clear way to simplify these two summations.} For example, 
if we plugin \eqref{eq:prod-fail} into the second term above we obtain the following non-trivial expression
\begin{equation*}
    \sum_{k=1}^{m}\pi_{m-k,k}(1-\gamma_s)^k
    \propto \sum_{k=1}^{m}\Big(\prod_{i=1}^{m-k} x_i\Big)\Big(\prod_{i=1}^{k}y_i\Big)(1-\gamma_s)^k.
\end{equation*}
In contrast, in the case without reneging we have
 $x_i=x=\frac{\lambda_2}{\mu_2}$ and $y_i=y=\frac{\lambda_1+\lambda_2}{\mu_1+\mu_2}$, therefore, 
 \begin{equation*}
     \pi_{m-k,k}\propto x^{m-k} y^k=\left(\frac{x}{y}\right)^{m-k}y^m,
 \end{equation*}
which can be used to show that
 \begin{align}
 \label{eq:no-reng-simpl}
     \sum_{k=1}^m\pi_{m-k,k}(1-\gamma_s)^k=ax^m,
 \end{align}
 for some positive constant $a$. The details of this simplification are presented in the proof of Proposition \ref{prop:no-reneg} in the appendix.
 This is a much more amenable expression that turns out to be fundamental in deriving Proposition \ref{prop:no-reneg}.
 Note that in general,  for non-constant $x_i$ and $y_i$ this simplification is not possible. However, in the next section we leverage some of these ideas to obtain a product form solution for the system with reneging.

 \subsection{The Product Form}

 We first state our main result which is the product form solution for the \fcu{N-system matching queue} with reneging. Then we comment on how this result compares to the case without reneging. After this we discuss the main ideas we use to develop our product form solution. 
\begin{theorem}\label{thm-prod-one-sided} The steady state probabilities for the one-sided N-system with reneging are given by 
\begin{equation*}
\pi_{m,n}=\left\{\begin{array}{ll}
\bigg(\frac{\mu_1}{\mu_1 + \mu_2 + m\theta_s }\bigg)\bigg(\prod_{i=1}^m\frac{\lambda_2}{\mu_2 + i\theta_s}\bigg)\bigg(\prod_{i=1}^n\frac{\lambda_1+\lambda_2}{\mu_1 + \mu_2 + m\theta_s + i\theta_s}\bigg)B, & ~~ m\ge1 \\
\bigg(\prod_{i=1}^n\frac{\lambda_1+\lambda_2}{\mu_1 + \mu_2 + i\theta_s}\bigg)B, & ~~ m=0 \end{array} \right.
\end{equation*}
where $B$ is a normalizing constant equal to $\pi_{0,0}$ --- the steady state probability of the system being empty.
\end{theorem}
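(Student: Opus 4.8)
The plan is to verify directly that the displayed family $\{\pi_{m,n}\}$ solves the stationary equations \eqref{eq:NB-st-1}--\eqref{eq:NB-st-4}, is summable, and hence — the chain on $\mathfrak{s}$ being irreducible and, as noted in Section~\ref{sec:imp-reneg}, positive recurrent (a Foster--Lyapunov argument with $V(m,n)=m+n$ gives drift $\le \text{const}-\theta_s(m+n)$) — coincides with the unique stationary distribution, which fixes $B=\pi_{0,0}$ by normalization. Summability is immediate since $\pi_{m,n}$ carries a factor $\prod_{i=1}^n\frac{\lambda_1+\lambda_2}{\mu_1+\mu_2+i\theta_s}$ whose ratio tends to $0$. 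So the whole proof reduces to checking the four balance equations on the four regions of $\mathfrak{s}$, and the only non-routine ingredient is simplifying the two diagonal sums in \eqref{eq:key-terms}.

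Concretely, I would first establish the two identities
\[
\sum_{k=0}^{m}\pi_{m-k,\,n+k+1}\,\mu_1\gamma_s(1-\gamma_s)^k \;=\; \gamma_s\bigl(\mu_1+\mu_2+m\theta_s\bigr)\,\pi_{m,n+1},\qquad m\ge 1,\ n\ge 0,
\]
\[
\sum_{k=1}^{m}\pi_{m-k,\,k}\,\mu_1(1-\gamma_s)^k \;=\; \bigl(\mu_2+m\theta_s\bigr)\,\pi_{m,0},\qquad m\ge 1 .
\]
These are the reneging counterpart of the collapse \eqref{eq:no-reng-simpl} used in the proof of Proposition~\ref{prop:no-reneg}, and they may be read as the \emph{partial balance} statement that the aggregate flux into $(m,n)$ (resp.\ into $(m,0)$) along all ``diagonal'' transitions triggered by inflexible demand equals a simple multiple of a single neighbouring probability. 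The first identity handles the summation term in \eqref{eq:NB-st-1} and the first summation term in \eqref{eq:NB-st-2}; the second handles the second summation term in \eqref{eq:NB-st-2}.

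To prove the identities, the key observation is that $\gamma_s=\lambda_1/(\lambda_1+\lambda_2)$ gives $(\lambda_1+\lambda_2)(1-\gamma_s)=\lambda_2$, so in each summand the factor $(\lambda_1+\lambda_2)^{k}(1-\gamma_s)^{k}$ coming from the $n$-part of $\pi_{m-k,\cdot}$ becomes $\lambda_2^{k}$ and all dependence on $k$ collapses into a clean product; writing $Q_j=\prod_{i=1}^{j}(\mu_2+i\theta_s)^{-1}$ and $P_{a,b}=\prod_{i=a}^{b}(\mu_1+\mu_2+i\theta_s)^{-1}$, each sum reduces to $\sum_j Q_j P_{j,\cdot}$, which telescopes because $\frac{1}{\mu_2+j\theta_s}-\frac{1}{\mu_1+\mu_2+j\theta_s}=\frac{\mu_1}{(\mu_2+j\theta_s)(\mu_1+\mu_2+j\theta_s)}$. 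A delicate point is that the last ($k=m$) term of the first sum falls on the $m=0$ branch of the product form, and the ``missing'' prefactor $\mu_1/(\mu_1+\mu_2)$ there is precisely what makes the telescoping boundary terms cancel — this is exactly why the two branches of the stated formula must be as they are, and why one cannot simply extend the $m\ge1$ expression to $m=0$.

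With the two identities in hand, I would substitute the claimed $\pi_{m,n}$ into \eqref{eq:NB-st-1}--\eqref{eq:NB-st-4}, using the elementary ratios $\pi_{m,n+1}/\pi_{m,n}=\frac{\lambda_1+\lambda_2}{\mu_1+\mu_2+(m+n+1)\theta_s}$ and $\pi_{m+1,n}/\pi_{m,n}=\frac{\lambda_2(\mu_1+\mu_2+m\theta_s)}{(\mu_2+(m+1)\theta_s)(\mu_1+\mu_2+(m+n+1)\theta_s)}$ (with their $m=0$ and $n=0$ specializations); after cancelling the common factor each equation collapses to a linear identity that holds precisely because $(\lambda_1+\lambda_2)\gamma_s=\lambda_1$, i.e.\ $(\lambda_1+\lambda_2)\gamma_s+\lambda_2=\lambda_1+\lambda_2$. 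The four regions (interior $m,n\ge1$; boundary $n=0$; boundary $m=0$; the origin) are checked in turn, only the first two invoking the diagonal-sum identities. The main obstacle is this second step — discovering and proving the two diagonal-sum identities: everything downstream is bookkeeping, but it is the telescoping argument, together with the requirement that the two branches of the product form fit together so the boundary terms cancel, that forces the exact shape of the solution, in particular the coupling $m\theta_s$ inside the $n$-product and the prefactor $\mu_1/(\mu_1+\mu_2+m\theta_s)$.
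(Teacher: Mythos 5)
Your proposal is correct and follows the paper's overall strategy: guess-and-verify, with the crux being the collapse of the two diagonal sums in \eqref{eq:key-terms}. Your second identity is exactly the paper's partial balance condition \eqref{eq:key-step-partial}, and your first identity is its immediate consequence (add the $k=0$ term $f(m)$ to the sum $\sum_{k\ge1}f(m-k)(1-\gamma_s)^k=f(m)(\mu_2+m\theta_s)/\mu_1$ and the prefactor $\gamma_s(\mu_1+\mu_2+m\theta_s)$ appears); the paper handles that first sum by splitting off $k=0$ rather than stating a second identity, but the content is the same. Where you genuinely diverge is in how the key identity is proved: the paper first recasts $\pi_{m,n}$ as $f(m)\,g(m+n)\,B$ (Lemma \ref{lm:alternative-form}) and then establishes the functional equation for $f$ by induction (Lemma \ref{lm:key-step-induction}), whereas you work directly on the explicit product form, use $(\lambda_1+\lambda_2)(1-\gamma_s)=\lambda_2$ to strip the $k$-dependence, and telescope via $\frac{1}{\mu_2+j\theta_s}-\frac{1}{\mu_1+\mu_2+j\theta_s}=\frac{\mu_1}{(\mu_2+j\theta_s)(\mu_1+\mu_2+j\theta_s)}$. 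I checked that this telescoping closes exactly as you describe, including the boundary cancellation forced by the $m=0$ branch lacking the $\mu_1/(\mu_1+\mu_2)$ prefactor, and your ratio $\pi_{m+1,n}/\pi_{m,n}$ is correct, so the downstream bookkeeping goes through. Your route buys a self-contained, computation-only verification and a clean explanation of why the two branches of the formula must differ; the paper's route buys the reusable abstraction $f(m)g(m+n)$ and the interpretation of the identity as partial balance, which it then recycles wholesale in the two-sided proof. Your explicit attention to ergodicity and summability (Foster--Lyapunov with $V=m+n$) is a point the paper treats only informally for the reneging case, and is a welcome addition rather than a deviation.
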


{\revision Note that because of reneging, $B$ no longer has a simplified closed form. It is calculated as $B=1-\sum_{m+n\ge1}\pi_{m,n}$.} Recall that in Proposition \ref{prop:no-reneg} without reneging, the steady state probabilities for $m\ge 1$ are given by
\begin{equation*}
    \frac{\mu_1}{\mu_1+\mu_2}\left(\frac{\lambda_2}{\mu_2}\right)^m\left(\frac{\lambda_1+\lambda_2}{\mu_1+\mu_2}\right)^n,
\end{equation*}
 which have a clear resemblance with Theorem \ref{thm-prod-one-sided}. However, because of abandonment, the steady state probabilities now become a product of multiple terms that account for the reneging rate at different queue lengths for the known and unknown parts in the state description. Despite the resemblance, we note that our solution is not a simple extension of the form \eqref{eq:prod-fail},
 $\Big(\prod_{i=1}^{m} x_i\Big)\Big(\prod_{i=1}^{n}y_i\Big)$. There are at least two main differences. 
First, the term $\mu_1/(\mu_1 + \mu_2 + m\theta_s)$ is not accounted for in \eqref{eq:prod-fail}. Second, and more importantly, in our expression $y_i$ depends on $m$ --- it equals 
$(\lambda_1+\lambda_2)/(\mu_1 + \mu_2 + m\theta_s + i\theta_s)$ --- and not solely on $i$. The intuition for this is that the known part of the queue (the $m$ part) impacts the evolution of the unknown part of the queue (the $n$ part) by increasing the rate at which the unknown supply become inflexible supply. In turn, including reneging to the system leads to
a non-trivial product form solution in which the interaction between known and unknown parts in the state description must be taken into account.

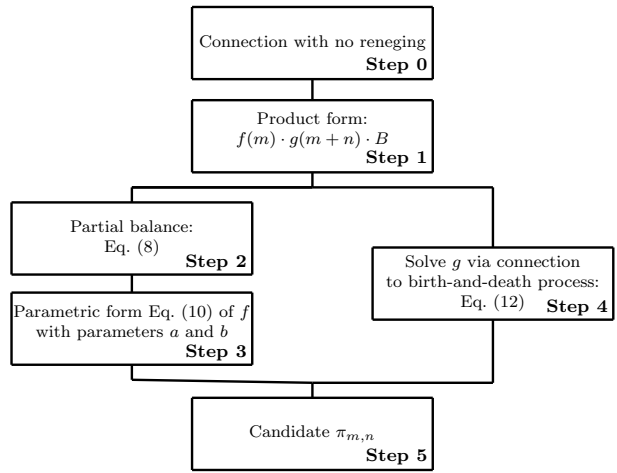
\begin{figure}[h]
\centering
\scalebox{0.8}{\begin{tikzpicture}[baseline=0pt]
\def\sy{0.6};
\def\sx{2};
\coordinate (A) at (5,5.25);
\node at (A) {Connection with no reneging};
\draw[-,line width=0.4mm]($(A)+(-\sx,\sy)$)--($(A)+(\sx,\sy)$);
\draw[-,line width=0.4mm]($(A)+(-\sx,-\sy)$)--($(A)+(\sx,-\sy)$);
\draw[-,line width=0.4mm]($(A)+(-\sx,-\sy)$)--($(A)+(-\sx,\sy)$);
\draw[-,line width=0.4mm]($(A)+(\sx,\sy)$)--($(A)+(\sx,-\sy)$);

\coordinate (B) at (5,3.7);
\node[align=center] at ($(B)+(0,+0.1)$) {Product form:\\ $f(m)\cdot g(m+n)\cdot B$};
\draw[-,line width=0.4mm]($(B)+(-\sx,\sy)$)--($(B)+(\sx,\sy)$);
\draw[-,line width=0.4mm]($(B)+(-\sx,-\sy)$)--($(B)+(\sx,-\sy)$);
\draw[-,line width=0.4mm]($(B)+(-\sx,-\sy)$)--($(B)+(-\sx,\sy)$);
\draw[-,line width=0.4mm]($(B)+(\sx,\sy)$)--($(B)+(\sx,-\sy)$);

\coordinate (C) at (2,2.0);
\node[align=center] at (C) {Partial balance:\\ Eq. \eqref{eq:key-step-partial}};
\draw[-,line width=0.4mm]($(C)+(-\sx,\sy)$)--($(C)+(\sx,\sy)$);
\draw[-,line width=0.4mm]($(C)+(-\sx,-\sy)$)--($(C)+(\sx,-\sy)$);
\draw[-,line width=0.4mm]($(C)+(-\sx,-\sy)$)--($(C)+(-\sx,\sy)$);
\draw[-,line width=0.4mm]($(C)+(\sx,\sy)$)--($(C)+(\sx,-\sy)$);

\coordinate (D) at (2,0.5);
\node[align=center] at ($(D)+(0,0.1)$) {Parametric form Eq. \eqref{eq:sol-f} of $f$ \\ with parameters $a$ and $b$};
\draw[-,line width=0.4mm]($(D)+(-\sx,\sy)$)--($(D)+(\sx,\sy)$);
\draw[-,line width=0.4mm]($(D)+(-\sx,-\sy)$)--($(D)+(\sx,-\sy)$);
\draw[-,line width=0.4mm]($(D)+(-\sx,-\sy)$)--($(D)+(-\sx,\sy)$);
\draw[-,line width=0.4mm]($(D)+(\sx,\sy)$)--($(D)+(\sx,-\sy)$);

\coordinate (E) at (8,1.25);
\node[align=center] at (E) {Solve $g$ via \revision{connection}\\
\revision{to birth-and-death process}:\\ Eq. \eqref{eq:recur-g} };
\draw[-,line width=0.4mm]($(E)+(-\sx,\sy)$)--($(E)+(\sx,\sy)$);
\draw[-,line width=0.4mm]($(E)+(-\sx,-\sy)$)--($(E)+(\sx,-\sy)$);
\draw[-,line width=0.4mm]($(E)+(-\sx,-\sy)$)--($(E)+(-\sx,\sy)$);
\draw[-,line width=0.4mm]($(E)+(\sx,\sy)$)--($(E)+(\sx,-\sy)$);

\coordinate (G) at (5,-1.25);
\node[align=center] at (G){Candidate $\pi_{m,n}$};
\draw[-,line width=0.4mm]($(G)+(-\sx,\sy)$)--($(G)+(\sx,\sy)$);
\draw[-,line width=0.4mm]($(G)+(-\sx,-\sy)$)--($(G)+(\sx,-\sy)$);
\draw[-,line width=0.4mm]($(G)+(-\sx,-\sy)$)--($(G)+(-\sx,\sy)$);
\draw[-,line width=0.4mm]($(G)+(\sx,\sy)$)--($(G)+(\sx,-\sy)$);


\draw[-,line width=0.4mm]($(A)+(0,-\sy)$)--($(B)+(0,\sy)$);
\draw[-,line width=0.4mm]($(C)+(0,-\sy)$)--($(D)+(0,\sy)$);
\draw[-,line width=0.4mm]($(E)+(0,-\sy)$)--($(E)+(0,\sy-2.25)$);
\draw[-,line width=0.4mm]($(E)+(0,\sy-2.25)$)--($(G)+(0,\sy+0.25)$);
\draw[-,line width=0.4mm]($(E)+(0,\sy)$)--($(E)+(0,\sy+1.0)$);
\draw[-,line width=0.4mm]($(E)+(0,\sy+1.0)$)--($(B)+(0,-\sy-0.25)$);
\draw[-,line width=0.4mm]($(B)+(0,-\sy)$)--($(B)+(0,-\sy-0.25)$);
\draw[-,line width=0.4mm]($(C)+(0,\sy+0.25)$)--($(B)+(0,-\sy-0.25)$);
\draw[-,line width=0.4mm]($(C)+(0,\sy+0.25)$)--($(C)+(0,\sy)$);
\draw[-,line width=0.4mm]($(C)+(0,\sy+0.25)$)--($(B)+(0,-\sy-0.25)$);

\draw[-,line width=0.4mm]($(G)+(0,\sy)$)--($(G)+(0,\sy+0.25)$);
\draw[-,line width=0.4mm]($(D)+(0,-\sy-0.25)$)--($(G)+(0,\sy+0.25)$);
\draw[-,line width=0.4mm]($(D)+(0,-\sy-0.25)$)--($(D)+(0,-\sy)$);

\node[rotate=0,font=\fontsize{8.5}{3.5}\selectfont] at ($(A)+(1.4,-0.375)$){\textbf{Step 0}};
\node[rotate=0,font=\fontsize{8.5}{3.5}\selectfont] at ($(B)+(1.4,-0.375)$){\textbf{Step 1}};
\node[rotate=0,font=\fontsize{8.5}{3.5}\selectfont] at ($(C)+(1.4,-0.4)$){\textbf{Step 2}};
\node[rotate=0,font=\fontsize{8.5}{3.5}\selectfont] at ($(D)+(1.4,-0.4)$){\textbf{Step 3}};
\node[rotate=0,font=\fontsize{8.5}{3.5}\selectfont] at ($(E)+(1.4,-0.4)$){\textbf{Step 4}};
\node[rotate=0,font=\fontsize{8.5}{3.5}\selectfont] at ($(G)+(1.4,-0.375)$){\textbf{Step 5}};

\end{tikzpicture}}
\caption{Key ideas for developing the product form. }
\label{fig:key-ideas}
\end{figure}

We now highlight the main ideas that lead us to Theorem \ref{thm-prod-one-sided}, see Figure \ref{fig:key-ideas} for a schematic representation. 

 \textbf{Step 0.} As  noted in Section \ref{sec:imp-reneg}, in order to obtain the steady state probabilities, the key terms to analyze are $\pi_{m-k,n+k+1}$ and $\pi_{m-k,k}$, see Eq. \eqref{eq:key-terms}. With this in mind, let us momentarily consider the setting without reneging
 and let $\tilde{\pi}_{m,n}$ denote its steady state probability.
In that setting the following transformation is useful in simplifying terms in Eq. \eqref{eq:key-terms}:
\begin{equation*}
\tilde{\pi}_{m-k,k}\propto x^{m-k} y^k= (x/y)^{m-k} y^{(m-k)+k},
\end{equation*}
where $x,y$ are given by Proposition \ref{prop:no-reneg}. From this we can observe that  $\tilde{\pi}_{m-k,k}$ is a function of its first component, $m-k$, and the sum of its two components, $m$.
 More generally, by rearranging terms in Proposition \ref{prop:no-reneg}, it is possible to see that the steady state probabilities for any state $(m,n)$ are a function of $m$ and $m+n$. We can now look at the case with reneging through this lens. 
 
\textbf{Step 1.} Suppose that there is reneging in the system. If we allow for a similar form as in the previous step in which $\pi_{m,n}$ depends on $m$ and $m+n$, we can simplify the expression in Eq. \eqref{eq:key-terms}. That is, assuming that 
$$\pi_{m,n}=f(m)g(m+n)\cdot B,~\forall m,n\ge0,$$ 
for some functions $f$ and $g$ such that $f(0)=g(0)=1$ to be determined, where $B$ is a normalizing constant. 
Then 
 \begin{equation*}
 \sum_{k=1}^{m}\pi_{m-k,k}(1-\gamma_s)^k=
 g(m)\sum_{k=1}^{m}f(m-k)(1-\gamma_s)^k.
 \end{equation*}
 
 \textbf{Step 2.} The next step is to consider a key partial balance condition which lead us to a simplified form for the summation $\sum_{k=1}^{m}f(m-k)(1-\gamma_s)^k$. To do that, we draw inspiration from the case without reneging. Recall that from Eq. \eqref{eq:no-reng-simpl} we have
 \begin{equation*}
     \sum_{k=1}^{m}\tilde{\pi}_{m-k,k}(1-\gamma_s)^k= a x^m=\tilde{\pi}_{m,0}\frac{\mu_2}{\mu_1}.
 \end{equation*}
 Multiplying both sides above by $\mu_1$ we observe the following 
 key partial balance property is satisfied: fix state $(m,0)$, then, in steady state, the flow into this state due to all unknown supply becoming inflexible supply equals the flow out of $(m,0)$
 due to inflexible supply leaving the system. 
 If we translate this property to the context with reneging 
 we would have 
  \begin{equation}\label{eq:key-step-partial}
     \sum_{k=1}^{m}\pi_{m-k,k}\mu_1 (1-\gamma_s)^k =\pi_{m,0}\cdot (\mu_2+\sr\cdot m).
 \end{equation}
This is similar to the case without reneging but we have to adjust the rate at which inflexible supply leaves state $(m,0)$. In the case with reneging inflexible supply leaves $(m,0)$ at rate $(\mu_2+\sr\cdot m)$.
In turn, the equation above establishes the partial balance property for the setting with abandonment. In Figure \ref{fig:partial_balance} we provide a graphical illustration of this property.

We can then replace $\pi_{m-k,k}$ and $\pi_{m,0}$ in the partial balance condition above by using $f$ and $g$. This delivers
 \begin{equation}\label{eq:key-step}
 \sum_{k=1}^{m}f(m-k)(1-\gamma_s)^k=f(m)\cdot (a+m\cdot b),
 \end{equation}
 where 
  \begin{equation*}
 a= \frac{\mu_2}{\mu_1}\quad \text{and}\quad b=\frac{\theta_s}{\mu_1}.
 \end{equation*}

We refer to Eq. \eqref{eq:key-step-partial} as the partial balance condition. 

\begin{figure}
    \centering
    \includegraphics[width=0.45\textwidth]{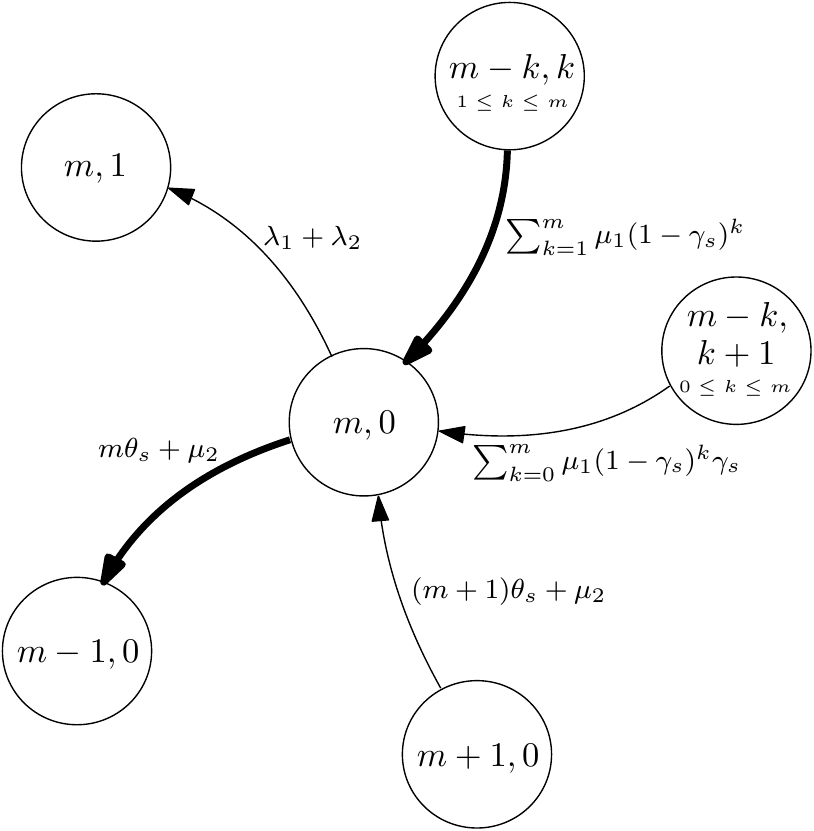}
    \caption{Partial balance at state $(m,0)$: the flow from states $(m-k,k)$ into $(m,0)$ equals the flow out of $(m,0)$ to state $(m-1,0)$; the bold arrows in the figure indicate the transition rates involved in the partial balance equation.}
    \label{fig:partial_balance}
\end{figure}

\textbf{Step 3.} From the partial balance condition in the previous step we can completely solve for $f$. An inductive argument shows that 
 any function $f$ with $f(0)=1$  that satisfies Eq. \eqref{eq:key-step} is of the form
 \begin{equation}\label{eq:sol-f}
 f(m)=\frac{(1-\gamma_s)^m}{a+b}\left(\prod_{i=2}^m \frac{1+a+(i-1)b}{a+ib}\right),\quad \forall m\geq1,
 \end{equation}
  we prove this result in Lemma \ref{lm:key-step-induction} in the appendix. In turn, given our product form solution, 
 $\pi_{m,n}=f(m)g(m+n)\cdot B$, we have characterized the steady state probabilities up to $g$. In the next step we discuss how to solve for $g$.
 
 \textbf{Step 4.} 
 We use
 Eq. \eqref{eq:NB-st-4} and the formula for $f(1)$ in Eq. \eqref{eq:sol-f} to solve for $g(1)$ as a function
 of both $a$ and $b$. We obtain
 \begin{equation}
 \label{eq:g-1}
 g(1)=\frac{\lambda_1+\lambda_2}{(\theta_s + \mu_1\gamma_s + \mu_2) +\frac{(1-\gamma_s)}{a+b}(\mu_2 + \theta_s)}.
 \end{equation}
 Then, we can employ Eq. \eqref{eq:NB-st-3} to solve for $g(m)$ for $m\geq 2$,
 \begin{equation}\label{eq:recur-g}
 g(m) =\frac{g(m-1)(\mu_1+\mu_2+\lambda_1+\lambda_2  + (m-1)\theta_s)-g(m-2)(\lambda_1+\lambda_2)}{\left(m\theta_s + \mu_1\gamma_s + \mu_2\right)+\frac{(1-\gamma_s)}{a+b}(\mu_2 + \theta_s)}.
 \end{equation}
 Note that this recursion is completely parametrized by $a$ and $b$. As a consequence, by solving this recursion we completely characterize the steady state probabilities. 
 {\revision Interestingly, by replacing the values of $a$ and $b$, this set of equations coincides exactly with the balance equations for a birth-and-death process with birth rates $\lambda_1+\lambda_2$ and  death rates 
 $\mu_1+\mu_2 +m\theta_s$:
 \begin{equation*}
     g(m-1)(\mu_1+\mu_2+\lambda_1+\lambda_2  + (m-1)\theta_s)=
     g(m)(\mu_1+\mu_2+m\theta_s)+g(m-2)(\lambda_1+\lambda_2),\quad m\ge 2,
 \end{equation*}
 with $g(1) = (\lambda_1+\lambda_2)/(\mu_1+\mu_2+\theta_s)$. Hence, the solution of $g(\cdot)$ is given by 
 }

 \begin{equation*}
 g(m)=\left(\prod_{i=1}^{m}\frac{\lambda_1+\lambda_2}{\mu_1+\mu_2+i \theta_s}\right) .
 \end{equation*}
  
 \textbf{Step 5.} We are now ready to obtain the product form. Replacing the values of $a$ and $b$ in Eq. \eqref{eq:sol-f}, and 
 using the product form $f(m)\cdot g(m+n)$ we obtain
 \begin{equation*}
     \pi_{m,n} = \frac{\mu_1(1-\gamma_s)^m}{\mu_2+\theta_2}\left(\prod_{i=2}^{m}\frac{(\mu_1+\mu_2+(i-1)\theta_s)}{\mu_2+i \theta_s}\right)\left(\prod_{i=1}^{m+n}\frac{\lambda_1+\lambda_2}{\mu_1+\mu_2+i \theta_s}\right)\cdot B, \quad \forall m\geq 1
 \end{equation*}
 For $m=0$, because $f(0)=1, g(m+n)=g(n)$, we have $\pi_{0,n}=\left(\prod_{i=1}^{n}\frac{\lambda_1+\lambda_2}{\mu_1+\mu_2+i \theta_s}\right)\cdot B$. Hence, $\pi_{m,n}$ is fully determined up to a normalizing constant. 
 
 Several comments are in order. In Theorem
 \ref{thm-prod-one-sided} we have presented the steady state probabilities in a simplified, more natural, form that is different from the equation above. In the proof of Theorem 
 \ref{thm-prod-one-sided} we prove that these expressions are equivalent. Also note that in the series of steps outlined above we have not verified all the equilibrium equations, Eq. \eqref{eq:NB-st-1} to Eq.
 \eqref{eq:NB-st-4}.

 Nevertheless, since $g$ solves the recursion in Eq. \eqref{eq:recur-g} it follows that 
  Eq. \eqref{eq:NB-st-3} and Eq. \eqref{eq:NB-st-4} are satisfied. 
  So that the only set of equations that remain to be checked are 
  Eq. \eqref{eq:NB-st-2}  and 
  Eq. \eqref{eq:NB-st-1}. In the theorem's proof we show that these equations are indeed verified by our solution. 
  Finally, we note that there are two key ideas in the previous derivations. First, we impose a product form solution in which $\pi_{m,n}$ equals the product of two functions: one that depends on the number of known supply in the system, $f(m)$, and another that depends on the total number of agents in the system, $g(m+n)$. Second, these functions can be fully determined by an appropriate partial balance condition, see Eq. \eqref{eq:key-step-partial} and Figure \ref{fig:partial_balance}.

\section{Two-sided System}
\label{sec:two-sided}
Consider now an extension of the system analyzed in Section \ref{sec:one-sided} in which demand units can queue. We refer to this new system as
the two-sided queue. Specifically, we assume flexible and inflexible supply (demand) arrive as independent Poisson streams, with rate $\fs$ ($\fd$) and $\nfs$ ($\nfd$). Flexible supply can be matched to either type 1 or type 2 demand, and inflexible supply can be matched only to type 2 demand in an FCFS manner. Supply (demand) stays in the system for an i.i.d. and exponentially distributed time with rate $\sr$ ($\dr$). Supply and demand can have different reneging rates. In what follows, to streamline exposition, we refer to the supply and demand side as the left and right side interchangeably, see Figure \ref{fig:n-system-intro}.

\subsection{State Space Representation}
\label{subsec:state_space_representation-two-sided}

There are three possible scenarios that can emerge. Supply can be queuing on the left side but there is no demand queuing on the right side. In this case we are in the exact same situation as in Section \ref{sec:one-sided}. However, when supply units begin to deplete, it is possible that demand units will have to wait for a compatible supply. Note that if there is  type 2 demand waiting then it must be that there is no supply waiting; but if there is type 1 demand waiting then it is possible that some inflexible supply is queuing. This gives rise to two additional system configurations. 
In one of them demand can be queuing on the right side and there is no supply on the left side. Importantly, this scenario is symmetric to the one-sided case studied in the previous section. In the other, there could be supply and demand queuing on both sides at the same time. The latter occurs only when supply and demand units are not compatible, that is, supplies are inflexible and demands are of type 1. This discussion motivates us to split the state description into three cases (which we depict in Figure \ref{fig:3-cases} below):
\begin{itemize}
    \item[(a)] \textbf{Queue on left side.} There is some supply queuing on the left side and exactly zero demand queuing on the right side. We use the same state representation, $(m,n)$, as in the previous section where  $m$ is the number of \emph{known} inflexible supplies at the beginning of the queue, and $n$ is the remaining number of supplies with \emph{unknown} type, i.e. they could be either flexible or inflexible. In this case we consider $(m,n)$ in $\mathfrak{s}=\left\{(m,n) \mid m,n\in\mathbb{Z}_{\ge0}, m+n>0 \right\}$, that is, there is always someone queuing on the left side (we treat the case $m=0,n=0$ separately). 
    See Figure \ref{fig:3-cases}(a).
    \item[(b)] \textbf{Queue on right side.} There is some demand queuing on the right side and exactly zero supply queuing on the left side. We use $(m,n)$ to represent the state of the system where $m$ is the number of \emph{known} type 1 demand at the beginning of the queue, and $n$ is the remaining number of demand with \emph{unknown} type. In this case we consider $(m,n)$ in $\mathfrak{s}$, that is, there is always someone queuing on the right side.
    Similarly to the previous section we denote $\fdprob=\mu_2/(\mu_1 + \mu_2)$ as the probability of a demand arrival being type 2.
    See Figure \ref{fig:3-cases}(b).
    \item[(c)] \textbf{Queue on both sides.} There is some inflexible supply queuing on the left side and some type 1 demand queuing on the right side. We use $(i,j)$ to denote the state of the system with $i,j\ge1$ where $i$ is the number of inflexible supplies on the left queue and $j$ is the number of type 1 demands on the right queue. Note that the system can be in this state only when there is a positive number of both inflexible supply and type 1 demand (and no inflexible supply and type 2 demand). In this state there are no feasible matches among the agents in the system. See Figure \ref{fig:3-cases}(c).
\end{itemize}
\begin{figure}[h]
\centering
\scalebox{0.6}{\begin{tikzpicture}[baseline=0pt]
\def\sf{-2}
\def\sg{4.5}

\draw[line width=0.6mm] (-7,4.5)--(-4,4.5);
\draw[line width=0.6mm] (-7,3.5)--(-4,3.5);
\draw[line width=0.6mm] (-4,3.5)--(-4,4.5);

\draw[line width=0.6mm] (7-\sg,4.5)--(4-\sg,4.5);
\draw[line width=0.6mm] (7-\sg,3.5)--(4-\sg,3.5);
\draw[line width=0.6mm] (4-\sg,3.5)--(4-\sg,4.5);

\node at (-4.65,4.0) {\Large $m$};
\node at (-6,4.0) {\Large $n$};
\draw[line width=0.6mm] (-5.25,4.5)--(-5.25,3.5);
\node at (1.0,4.0) {\Large no demand};

\node at (-4.65,4.75){\large Type 2};
\node at (-6.2,4.75){\large Unknown};


\draw[line width=0.6mm] (7-\sg,4.5+\sf)--(4-\sg,4.5+\sf);
\draw[line width=0.6mm] (7-\sg,3.5+\sf)--(4-\sg,3.5+\sf);
\draw[line width=0.6mm] (4-\sg,3.5+\sf)--(4-\sg,4.5+\sf);

\draw[line width=0.6mm] (-7,4.5+\sf)--(-4,4.5+\sf);
\draw[line width=0.6mm] (-7,3.5+\sf)--(-4,3.5+\sf);
\draw[line width=0.6mm] (-4,3.5+\sf)--(-4,4.5+\sf);

\node at (-5.5,4.0+\sf) {\Large no supply}; 
\node at (0.15,4.0+\sf) {\Large $m$};
\node at (1.5,4.0+\sf) {\Large $n$};
\draw[line width=0.6mm] (0.75,4.5+\sf)--(0.75,3.5+\sf);

\node at (0.15,4.75+\sf){\large Type 1};
\node at (1.65,4.75+\sf){\large Unknown};

\draw[line width=0.6mm] (-7,4.5+2*\sf)--(-4,4.5+2*\sf);
\draw[line width=0.6mm] (-7,3.5+2*\sf)--(-4,3.5+2*\sf);
\draw[line width=0.6mm] (-4,3.5+2*\sf)--(-4,4.5+2*\sf);

\draw[line width=0.6mm] (7-\sg,4.5+2*\sf)--(4-\sg,4.5+2*\sf);
\draw[line width=0.6mm] (7-\sg,3.5+2*\sf)--(4-\sg,3.5+2*\sf);
\draw[line width=0.6mm] (4-\sg,3.5+2*\sf)--(4-\sg,4.5+2*\sf);

\node at (-5.45,4.0+2*\sf) {\Large $i$};
\node at (0.95,4.0+2*\sf) {\Large $j$};
\node at (0.95,4.75+2*\sf){\large Type 1};
\node at (-5.45,4.75+2*\sf){\large Type 2};

\node at (-8.5,4.0){\Large \textbf{Case (a):}};
\node at (-8.5,4.0+1*\sf){\Large \textbf{Case (b):}};
\node at (-8.5,4.0+2*\sf){\Large \textbf{Case (c):}};

\node at (6,5.0){\Large \textbf{Steady state probability}};
\node at (6,4.0){\Large $\pi^L_{m,n}$};
\node at (6,4.0+1*\sf){\Large $\pi^R_{m,n}$};
\node at (6,4.0+2*\sf){\Large $q_{i,j}$};
\end{tikzpicture}}
\caption{Two sided queue state space representation.}
\label{fig:3-cases}
\end{figure}
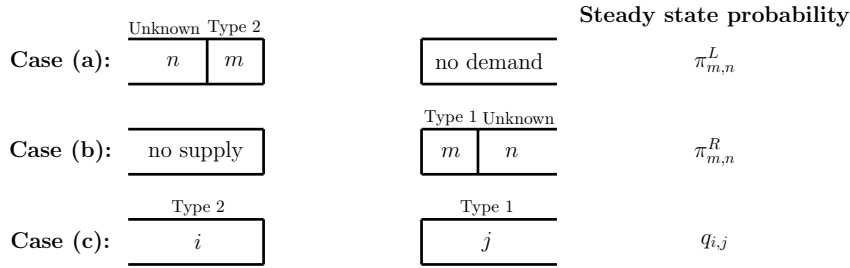

\subsection{Equilibrium Equations}
We use $\{\pi^L_{m,n}\}_{(m,n)\in\mathfrak{s} }$, $\{\pi^R_{m,n}\}_{(m,n)\in\mathfrak{s}}$ and $\{q_{i,j}\}_{i,j\ge 1}$ to denote the steady state probabilities for cases (a), (b) and (c), respectively. We use $\pi_{0,0}$ to denote the steady state probability of the empty system. These probabilities must satisfy the following set of equilibrium equations.

\begin{itemize}
\item[(a)] \textbf{Queue on left side:}
\begin{align}\nonumber
\pi^L_{m,n}(\mu_1+\mu_2+\lambda_1+\lambda_2 + (m + n)\sr) &=\pi^L_{m,n+1}  (n+1)\sr+\pi^L_{m+1,n}(\mu_2+(m+1)\sr)\\\nonumber
&+\pi^L_{m,n-1}(\lambda_1+\lambda_2)\\\label{eq:NB2-st-1-L}
&+\sum_{k=0}^m\pi^L_{m-k,n+k+1}\mu_1\fsprob(1-\fsprob)^k, \quad m\ge 1,n\geq 1; \\
\nonumber \\ \nonumber
\pi^L_{m,0}(\mu_1+\mu_2+\lambda_1+\lambda_2 + m\sr ) &= \pi^L_{m,1}\sr
+ \pi^L_{m+1,0}(\mu_2 + \sr(m + 1))\\\label{eq:NB2-st-2-L}
&+ q_{m,1}(\lambda_1 +\dr)+\sum_{k=0}^m\pi^L_{m-k,k+1}\mu_1\fsprob(1-\fsprob)^k,\quad m\geq 1; \\ \nonumber \\
\nonumber
\pi^L_{0,n}(\mu_1+\mu_2+\lambda_1+\lambda_2  + n\sr) &=
\pi^L_{0,n+1}((n+1)\sr + \mu_1\fsprob + \mu_2)+\pi^L_{1,n}(\mu_2 + \sr) \\\label{eq:NB2-st-3-L}
& + \pi^L_{0,n-1}(\lambda_1+\lambda_2),
\quad n\geq 1.
\end{align}
Note that equations from \eqref{eq:NB2-st-1-L} to \eqref{eq:NB2-st-3-L}
are similar to the equations that define the one-sided steady state probabilities. However they differ in that now the system can transition to case (a) from case (c) as well.
For example, consider equation \eqref{eq:NB2-st-2-L}. There the system can transition to state $(m,0)$ --- in which there is $m$ inflexible supply on the left side and no demand queuing on the right side --- from  state $(m,1)$ --- in which there is $m$ supplies queuing on the left side and one demand queuing on the right side (see the term $q_{m,1}$ in Eq. \eqref{eq:NB2-st-2-L}). To achieve this transition it is enough for the demand on the right side to renege or, also, it is possible that an arrival of flexible supply is matched to the demand.\\

\item[(b)] \textbf{Queue on right side:}
\begin{align}\nonumber
\pi^R_{m,n}(\mu_1+\mu_2+\lambda_1+\lambda_2 + (m + n)\dr )&=\pi^R_{m,n+1}  (n+1)\dr+\pi^R_{m+1,n}(\lambda_1+(m+1)\dr)\\\nonumber
&+\pi^R_{m,n-1}(\mu_1+\mu_2)
\\\label{eq:NB2-st-1-R}
&+\sum_{k=0}^m\pi^R_{m-k,n+k+1}\lambda_2\fdprob(1-\fdprob)^k, \quad m\ge 1,n\geq 1;
\\ \nonumber \\ \nonumber
\pi^R_{m,0}(\mu_1+\mu_2+\lambda_1+\lambda_2 + m\dr ) &= \pi^R_{m,1}\dr
+ \pi^R_{m+1,0}(\lambda_1 + (m + 1)\dr)\\ \label{eq:NB2-st-2-R}
&+q_{m,1}(\mu_2 + \sr)
+\sum_{k=0}^m\pi^R_{m-k,k+1}\lambda_2\fdprob(1-\fdprob)^k
,\quad m\geq1;
\\ \nonumber \\ \nonumber
\pi^R_{0,n}(\mu_1+\mu_2+\lambda_1+\lambda_2  + n\dr) &=
\pi^R_{0,n+1}((n+1)\dr + \lambda_2\fdprob + \lambda_1)\\\label{eq:NB2-st-3-R}
&+\pi^R_{1,n}(\lambda_1 + \dr)  + \pi^R_{0,n-1}(\mu_1+\mu_2),\quad n\geq 1.
\end{align}

This set of equations are symmetric to the previous one.\\

\item[(c)] \textbf{Queue on both sides:}
\begin{align}\nonumber
q_{i,j}(i\sr + j\dr + \mu_1 + \mu_2 + \lambda_1 + \lambda_2 )&= q_{i+1,j}((i+1)\sr + \mu_2)\\\label{eq:NB2-st-1-M}
&+ q_{i-1,j}\lambda_2 + q_{i,j+1}((j+1)\dr + \lambda_1) + q_{i,j-1}\mu_1,\quad i\ge 2, j\ge 2; \\ \nonumber \\ \nonumber
q_{1,j}(\sr + j\dr+ \mu_1 + \mu_2 + \lambda_1 + \lambda_2) &= q_{2,j}(2\sr + \mu_2)+ q_{1,j+1}((j+1)\dr + \lambda_1)\\\label{eq:NB2-st-2-M}
&+ q_{1,j-1}\mu_1
+\sum_{k=0}^j\pi^R_{j-k,k}\lambda_2(1-\fdprob)^k,\quad
j\geq 2;\\ \nonumber \\ \nonumber
q_{i,1}(i\sr + \dr + \mu_1 + \mu_2 + \lambda_1 + \lambda_2 ) &= q_{i+1,1}((i+1)\sr + \mu_2)
+ q_{i-1,1}\lambda_2 + q_{i,2}(2\dr + \lambda_1)\\\label{eq:NB2-st-3-M}
&+ \sum_{k=0}^i \pi^L_{i-k,k}\mu_1(1-\fsprob)^k,\quad 
i\ge 2; \\ \nonumber \\ \nonumber
q_{1,1}(\sr + \dr + \mu_1 + \mu_2 + \lambda_1 + \lambda_2 )&=q_{2,1}(2\sr + \mu_2)
+ \pi^R_{1,0}\lambda_2
+\pi^R_{0,1}\lambda_2(1-\fdprob)
+ q_{1,2}(2\dr + \lambda_1)\\\label{eq:NB2-st-4-M} 
&+ \pi^L_{1,0}\mu_1 + \pi^L_{0,1}\mu_1(1-\fsprob)
\end{align}
\end{itemize}

Eq. \eqref{eq:NB2-st-1-M} corresponds to a set of equilibrium equations when the inflexible supply and type 1 demand queues behave as two separate M/M/1+M queues. Eq. \eqref{eq:NB2-st-2-M} and \eqref{eq:NB2-st-3-M} contains an additional edge case of transition from case (a) or (b) to (c): the arriving type 1 (inflexible) demand (supply) into an empty demand (supply) queue finds no compatible match and queues in the system. Similarly, Eq. \eqref{eq:NB2-st-4-M} contains two other edge scenarios at state $(1,1)$ where both cases (a) and (b) can transition to (c). \\

Finally, the following equation couples $\pi^R$, $\pi^L$ and $\pi_{0,0}$,
\begin{flalign}\label{eq:NB2-st-4-L}
\pi_{0,0}(\lambda_1+\lambda_2+\mu_1+\mu_2) =
\pi^L_{0,1}(\sr + \mu_1\fsprob + \mu_2) + 
\pi^L_{1,0}(\mu_2 + \sr)
+\pi^R_{1,0}(\dr+ \lambda_1)+
\pi^R_{0,1}(\dr + \lambda_1 + \lambda_2\fdprob).
\end{flalign}

\subsection{The Product Form}
Thanks to the intuition developed in Section \ref{sec:one-sided} and Theorem \ref{thm-prod-one-sided},
we can make an educated guess for the steady state probabilities of the two-sided matching queue with reneging. When the system is as in Figure \ref{fig:3-cases}(a), it operates in  the same fashion as the one-sided system of Section \ref{sec:one-sided}. In turn, the steady-state probabilities should be proportional to those presented in Theorem \ref{thm-prod-one-sided}. When the system is as in
Figure \ref{fig:3-cases}(b), the situation is symmetric, hence the steady state probabilities correspond to the mirror image of the ones for case (a). 
The situation is different for the case depicted in Figure \ref{fig:3-cases}(c). If we consider the left side of the system, the queue increases whenever there is an arrival of inflexible supply; and decreases whenever a supply reneges or there is an arrival of type 2 demand. Note that the arrival of flexible supply does not affect the queue on the left side because it is immediately matched with a unit of demand on the right side, thereby reducing the amount of type 1 demand. Also, an arrival of type 1 demand will not affect the supply queue but will increase the demand queue. As a consequence, in this case, both sides of the system behave like an M/M/1+M queue. The following theorem formalizes this discussion. 

\begin{theorem}\label{thm:two-sided}The steady state probabilities for the two sided N-system with reneging are given by 
\begin{align}
\label{eq:two_sided_mm1}
&q_{i,j} = \left(\prod_{k=1}^i\frac{\lambda_2}{\mu_2 + k\sr}\right)\left(\prod_{k=1}^j\frac{\mu_1}{\lambda_1 + k\dr}\right)B,\quad i,j\geq 1,\\ \label{eq:two_sided_left}
&\pi^L_{m,n}=\left\{\begin{array}{lr}
\bigg(\frac{\mu_1}{\mu_1 + \mu_2 + m\theta_s }\bigg)\bigg(\prod_{i=1}^m\frac{\lambda_2}{\mu_2 + i\theta_s}\bigg)\bigg(\prod_{i=1}^n\frac{\lambda_1+\lambda_2}{\mu_1 + \mu_2 + m\theta_s + i\theta_s}\bigg)B, & \hspace{-6mm}m\ge1 \\
\bigg(\prod_{i=1}^n\frac{\lambda_1+\lambda_2}{\mu_1 + \mu_2 + i\theta_s}\bigg)B, & \hspace{-6mm}m=0, n>0 \end{array} \right.\\ \label{eq:two_sided_right}
&\pi^R_{m,n}=\left\{\begin{array}{lr}
\bigg(\frac{\lambda_2}{\lambda_1 + \lambda_2 + m\theta_d }\bigg)\bigg(\prod_{i=1}^m\frac{\mu_1}{\lambda_1 + i\theta_d}\bigg)\bigg(\prod_{i=1}^n\frac{\mu_1+\mu_2}{\lambda_1 + \lambda_2 + m\theta_d + i\theta_d}\bigg)B, & \hspace{-6mm}m\ge1 \\
\bigg(\prod_{i=1}^n\frac{\mu_1+\mu_2}{\lambda_1 + \lambda_2 + i\theta_d}\bigg)B, & \hspace{-6mm}m=0, n>0 \end{array} \right.
\end{align}
where $B$ is a normalizing constant equal to $\pi_{0,0}$ --- the steady state probability of having an empty system. 
\end{theorem}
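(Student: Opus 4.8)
The plan is a direct verification: I will check that the stated family $\{q_{i,j},\pi^L_{m,n},\pi^R_{m,n},\pi_{0,0}\}$ satisfies every equilibrium equation \eqref{eq:NB2-st-1-L}--\eqref{eq:NB2-st-4-M} together with \eqref{eq:NB2-st-4-L}, and that it is summable; since the chain is irreducible and reneging guarantees positive recurrence (Section~\ref{sec:imp-reneg}), this identifies it, after normalizing by $B$, as the unique stationary distribution. The entire argument rests on three structural facts, after which each balance equation collapses by elementary algebra.

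First, $\pi^L_{m,n}$ is exactly $B$ times the one-sided stationary distribution of Theorem~\ref{thm-prod-one-sided}, and $\pi^R_{m,n}$ is $B$ times its mirror image under the involution $\lambda_1\leftrightarrow\mu_2$, $\lambda_2\leftrightarrow\mu_1$, $\theta_s\leftrightarrow\theta_d$ (which swaps $\gamma_s\leftrightarrow\gamma_d$ and interchanges the roles of inflexible supply and type-$1$ demand). Hence $\pi^L$ and $\pi^R$ automatically satisfy the one-sided balance relations scaled by $B$; in particular \eqref{eq:NB2-st-1-L}, \eqref{eq:NB2-st-3-L}, \eqref{eq:NB2-st-1-R}, \eqref{eq:NB2-st-3-R} hold verbatim (at an interior state it is immaterial whether a failed type-$1$-demand scan routes the state into case~(a) or into case~(c)), and the one-sided partial balance \eqref{eq:key-step-partial} holds for $\pi^L$, with its mirror $\sum_{k=1}^m\pi^R_{m-k,k}\lambda_2(1-\gamma_d)^k=\pi^R_{m,0}(\lambda_1+m\theta_d)$ holding for $\pi^R$. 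Second, $q_{i,j}$ is a product of two M/M/1+M-type factors, so it obeys the detailed balance relations $q_{i+1,j}\big((i+1)\theta_s+\mu_2\big)=q_{i,j}\lambda_2$ and $q_{i,j+1}\big((j+1)\theta_d+\lambda_1\big)=q_{i,j}\mu_1$ for all $i,j\ge1$; substituting these (and their index shifts) into the four neighbour terms of \eqref{eq:NB2-st-1-M} reproduces its left-hand side at once. Third, plugging in the explicit formulas yields the two ``gluing'' identities
\[
\pi^L_{m,0}\,(\mu_1+\mu_2+m\theta_s)=q_{m,1}(\lambda_1+\theta_d),\qquad \pi^R_{m,0}\,(\lambda_1+\lambda_2+m\theta_d)=q_{1,m}(\mu_2+\theta_s),\quad m\ge1,
\]
which are precisely the partial balances across the case~(a)/(c) and case~(b)/(c) interfaces.

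With these in hand the remaining equations are bookkeeping. For \eqref{eq:NB2-st-2-L}, subtracting the $B$-scaled one-sided equation \eqref{eq:NB-st-2} reduces the claim to $\mu_1\sum_{k=0}^m\pi^L_{m-k,k}(1-\gamma_s)^k=q_{m,1}(\lambda_1+\theta_d)$; the $k\ge1$ part of the sum equals $\pi^L_{m,0}(\mu_2+m\theta_s)$ by \eqref{eq:key-step-partial} and the $k=0$ term contributes $\mu_1\pi^L_{m,0}$, so the left side is $\pi^L_{m,0}(\mu_1+\mu_2+m\theta_s)$ and the first gluing identity closes it; \eqref{eq:NB2-st-2-R} is the mirror. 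For the case~(c) boundary equations \eqref{eq:NB2-st-2-M}, \eqref{eq:NB2-st-3-M}, \eqref{eq:NB2-st-4-M}, I replace each $q$-neighbour term via detailed balance, collapse the scan-sums $\sum_k\pi^L_{i-k,k}\mu_1(1-\gamma_s)^k$ and $\sum_k\pi^R_{j-k,k}\lambda_2(1-\gamma_d)^k$ using the (mirrored) partial balance into $\pi^L_{i,0}(\mu_1+\mu_2+i\theta_s)$ and $\pi^R_{j,0}(\lambda_1+\lambda_2+j\theta_d)$, and then apply the gluing identities; every resulting term becomes a multiple of the corresponding $q_{1,j}$, $q_{i,1}$ or $q_{1,1}$, and the coefficients add up exactly to the out-rate on the left. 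Finally \eqref{eq:NB2-st-4-L} is checked by direct substitution of $\pi^L_{0,1},\pi^L_{1,0},\pi^R_{1,0},\pi^R_{0,1}$, using $\mu_1\gamma_s=\mu_1\lambda_1/(\lambda_1+\lambda_2)$ and $\lambda_2\gamma_d=\lambda_2\mu_2/(\mu_1+\mu_2)$ to telescope the two groups of terms to $(\lambda_1+\lambda_2)B$ and $(\mu_1+\mu_2)B$. Summability is immediate since each factor ($\lambda_2/(\mu_2+k\theta_s)$, $\mu_1/(\lambda_1+k\theta_d)$, $(\lambda_1+\lambda_2)/(\mu_1+\mu_2+i\theta_s)$, and so on) tends to $0$, so all three families decay super-exponentially; hence a positive $B$ normalizing the total mass exists.

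The main obstacle is the case~(c) boundary equations: the content is not deep, but one must handle the edge behaviour at $i=1$ and $j=1$ carefully, since there the ``neighbour below'' in the $q$-lattice is actually a state of case~(b) or case~(a), and one must correctly identify the mirrored form of \eqref{eq:key-step-partial} so that the sums over $\pi^R$ collapse. The conceptual structure — $\pi^L,\pi^R$ being the one-sided solutions and $q$ factorizing into two independent M/M/1+M queues, linked only through partial balance across region boundaries — is already encoded in the statement, so once the two gluing identities and the mirrored partial balance are established, the rest is mechanical.
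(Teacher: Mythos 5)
Your proposal is correct and follows essentially the same route as the paper's proof: direct verification of the equilibrium equations, reusing Theorem \ref{thm-prod-one-sided} (and its mirror image under $\lambda_1\leftrightarrow\mu_2$, $\lambda_2\leftrightarrow\mu_1$, $\theta_s\leftrightarrow\theta_d$) for the case-(a)/(b) equations, collapsing the scan sums via the partial balance of Lemma \ref{lm:key-step-induction}, and exploiting the two-fold M/M/1+M product structure of $q_{i,j}$ for the case-(c) equations. Your explicit ``gluing identities'' $\pi^L_{m,0}(\mu_1+\mu_2+m\theta_s)=q_{m,1}(\lambda_1+\theta_d)$ and its mirror are precisely the identities the paper establishes inline when verifying \eqref{eq:NB2-st-2-L} and \eqref{eq:NB2-st-2-M}--\eqref{eq:NB2-st-4-M}, so the difference is purely organizational.
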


{\revision Similar to Theorem \ref{thm-prod-one-sided}, $B$ no longer has an simplified closed form due to reneging. It is calculated as $B=1-\sum_{m+n\ge1}\pi^L_{m,n}-\sum_{m+n\ge1}\pi^R_{m,n}-\sum_{i\ge1, j\ge1}q_{i,j}$.} The proof of the theorem consists of checking that the conjectured steady state probabilities \eqref{eq:two_sided_mm1}, \eqref{eq:two_sided_left} and \eqref{eq:two_sided_right} satisfy the equilibrium equations \eqref{eq:NB2-st-1-L} to \eqref{eq:NB2-st-4-M}. This task is greatly simplified because of the aforementioned similarities between the one-sided system and the two-sided system. For example, consider the left side of the two-sided system for which we need to verify 
\eqref{eq:NB2-st-1-L} to \eqref{eq:NB2-st-3-L}. Note that 
both equation \eqref{eq:NB2-st-1-L} and \eqref{eq:NB2-st-3-L}  appear in the equilibrium equations for the one-sided system. Since $\pi_{m,n}^L$ has the same form as $\pi_{m,n}$, by using Theorem \ref{thm-prod-one-sided}, these two equilibrium equations are readily checked. \fcu{ For \eqref{eq:NB2-st-2-L}, 
we can leverage our one-sided results to obtain 
much simplified equations. Indeed, since $\pi^L_{m,n}$ satisfies Eq. \eqref{eq:NB-st-2} we deduce that  Eq. \eqref{eq:NB2-st-2-L} becomes}
\begin{equation*}
    q_{m,1}(\lambda_1 + \theta_d) = \pi^L_{m,0}\mu_1 + \sum_{k=1}^{m}\pi^L_{m-k,k}\mu_1(1-\gamma_s)^k,\quad m\geq 1,
\end{equation*}
which can be readily verified --- \cyu{after using the partial balance equation to simplify the second term on the right hand sided as we did for the one-sided case, see Eq. \eqref{eq:key-step}}. Since left and right sides are symmetric, a similar argument applies to the right side. 
Finally, the remaining equations \eqref{eq:NB2-st-1-M} to \eqref{eq:NB2-st-4-M} are much easier to verify as they very much behave as two separate M/M/1+M queues. We present the full details in the appendix. 

\section{Concluding Remarks}
\label{sec:conclusion}

In this paper we perform an exact analysis of the steady state probabilities for a matching queue with reneging specified by an N-system. We first investigate a one-sided system in which demand units leave the system if they are not immediately matched; then we analyze the two-sided case in which demand units would queue in the system. We note that our approach of solving for the steady state probabilities is constructive. It relies on identifying an intrinsic partial balance equation which equalizes the flow into state $(m,0)$ due to all unknown supply becoming inflexible supply with the flow out of the state due to inflexible supply leaving the system. {\revision Here we offer some concluding remarks and future directions.}

\textbf{Type-dependent reneging rate.} {\revision A natural extension of our work is to consider different reneging rates for type 1 and type 2 supply (demand), $\theta_{s,1}$ and $\theta_{s,2}$ ($\theta_{d,1}$ and $\theta_{d,2}$). An immediate challenge is that the parsimonious state space representation $(m,n)$ considered in this paper is no longer Markovian. This is because the $n$ part of the state space now consists of supply (demand) of different types, and the total reneging rates of these $n$ supplies (demands) cannot be determined using the memoryless property. Consider an example of only one supply with unknown type. The patience of that supply is distributed as the mixture of two exponential random variables with rates $\theta_{s,1}$ and $\theta_{s,2}$, which is no longer exponentially distributed. On the other hand, under the expanded state space considered in \cite{adan2018fcfs} which tracks the complete arrival sequence of supply of different types, one can derive a product-form solution by directly incorporating reneging rates (see Theorem 3.13 in \cite{gardner2020product}). An interesting direction could be developing  a new  state space representation which preserves Markovian properties under type-dependent reneging rates.}

\textbf{General matching graph.} The N-system studied in this paper has a wide variety of applications and it is also the cornerstone for analyzing more general matching queues. We believe our ideas can be useful in the analysis of more complex topologies. {\revision Note that a product form solution under general matching graphs with reneging can be obtained using the expanded state space representation in \cite{adan2018fcfs} and \cite{gardner2020product} which tracks the entire arrival sequence. However, this state representation might not be tractable for computation. Another approach is to use the multidimensional but reduced state space representation in \cite{visschers2012product}. That representation assumes that servers are fixed a priori, and partitions the arrival sequence into pieces separated by servers with different compatibility. In our context, however, servers are arriving future demands. Thus, the first step is to adapt the state space representation to our setting, where partitions have to be reformed upon demand arrival. Then, new structural product forms, in contrast to the form of  $\pi_{m,n}=f(m)g(m+n)\cdot B$ in the N-system, have to be investigated. On a related note, given our analysis of the two-sided matching queue, it is likely that the results of general matching graphs (once completed) can be carried over to two-sided systems as well.}

\textbf{Performance evaluation.}
There has also been a number of studies that consider the design and performance evaluation of parallel FCFS systems. The problem of how to optimally design a matching graph to balance waiting time delays and  matching rewards generated by pairing customers and servers under FCFS-ALIS has been studied in \cite{afeche2019optimal}.  \cite{adan2019design} design heuristics to approximate customer-to-sever matching rates by means of an FCFS bipartite infinite matching model first proposed in \cite{caldentey2007heavy}, and further analyzed in \cite{adan2012exact} and \cite{fazel2018approximating}. We hope that our work can provide some insights on how to design and analyze other bipartite matching systems where agents might abandon. 

\begin{acknowledgements}

We would like to thank Professor Gideon Weiss, Ivo Adan and two anonymous reviewers for comments and suggestions that has encouraged and improved this work. 

\end{acknowledgements}

\newpage
\appendix
\section{Additional Results}
\label{apx:additional_results}
\begin{proof}{\emph{of Proposition \ref{prop:no-reneg}}.} The set of equilibrium equations that needs to be verified are Eq. \eqref{eq:NB-st-1} to Eq. \eqref{eq:NB-st-4} where we replace $\sr$ with zero. To simplify notation define 
\begin{equation*}
    x=\frac{\lambda_2}{\mu_2},\quad y=\frac{\lambda_1+\lambda_2}{\mu_1+\mu_2}, \quad \text{and}\quad C=\frac{\mu_1}{\mu_1+\mu_2}.
\end{equation*}
The key step is to compute 
\begin{equation*}
    \sum_{k=0}^m\pi_{m-k,n+k+1}\mu_1\gamma_s(1-\gamma_s)^k\quad \text{and}\quad 
    \sum_{k=1}^{m}\pi_{m-k,k}\mu_1(1-\gamma_s)^k.
\end{equation*}
For the first we have (for $n\ge 0$)
\begin{flalign*}
\frac{1}{B}\sum_{k=0}^m\pi_{m-k,n+k+1}\mu_1\gamma_s(1-\gamma_s)^k&=
C\sum_{k=0}^{m-1} x^{m-k}y^{n+k+1}\mu_1\gamma_s(1-\gamma_s)^k+
 y^{n+m+1}\mu_1\gamma_s(1-\gamma_s)^m\\
&= C\sum_{k=0}^{m-1} \left(\frac{x}{y}\right)^{m-k}y^{m+n+1}\mu_1\gamma_s(1-\gamma_s)^k+
 y^{n+m+1}\mu_1\gamma_s(1-\gamma_s)^m \\
&=
 y^{n+1}\mu_1\gamma_s\left(C x^m\sum_{k=0}^{m-1} \left(\frac{x}{y}\right)^{-k}(1-\gamma_s)^k+
 y^{m}(1-\gamma_s)^m\right)\\
 &=
 y^{n+1}\mu_1\gamma_s\left(C x^m \frac{1-\left(\frac{y(1-\gamma_s)}{x}\right)^m}{1-\frac{y(1-\gamma_s)}{x}}+
 y^{m}(1-\gamma_s)^m\right)\\
 &=y^{n+1}\mu_1\gamma_s x^m\\
 &=C\lambda_1 y^{n} x^m,\\
\end{flalign*}
where the last two equalities come from replacing the values of $x,y$ and $C$. Similarly, we can verify that 
\begin{equation*}
    \frac{1}{B}\sum_{k=1}^{m}\pi_{m-k,k}\mu_1(1-\gamma_s)^k=C\mu_2 x^m.
\end{equation*}
Next we proceed to verify the equilibrium equations.  Eq. \eqref{eq:NB-st-1} and 
Eq. \eqref{eq:NB-st-2} can be simplified by replacing the result derived above
which deliver
\begin{flalign*}
    Cy^{n} x^m\left(\mu_1+\mu_2+\lambda_1+\lambda_2\right)
&= Cy^{n} x^{m+1}\mu_2+Cy^{n-1} x^m(\lambda_1+\lambda_2)
+C\lambda_1 y^{n} x^m\\
C x^m(\mu_2+\lambda_1+\lambda_2)&=  C y x^{m+1}\mu_2 
 + C\lambda_1 y x^m +C\mu_2 x^m,
\end{flalign*}
for Eq. \eqref{eq:NB-st-1} and 
Eq. \eqref{eq:NB-st-2}, respectively. After dividing both sides (in both equations) by $Cx^m y^n$, it is easy to see that the relations above are satisfied. Eq. \eqref{eq:NB-st-3} and
Eq. \eqref{eq:NB-st-4} can be verified by a similar procedure. Moreover,
note that $B$ can be explicitly computed 
\begin{equation*}
    B=\frac{(\mu_1+\mu_2-\lambda_1-\lambda_2)(\mu_2-\lambda_2)}{(\mu_1+\mu_2-\lambda_2)\mu_2}.
\end{equation*}
Finally, note that by Proposition 6 in \cite{visschers2012product}
the stability conditions in the statement of the proposition guarantee ergodicity (and that $B$ is well defined). 
In turn, $\pi_{m,n}$ corresponds to the steady state probabilities. 

\end{proof}
\begin{proof}{\emph{of Theorem \ref{thm-prod-one-sided}}.}
The first step in the proof is to rewrite the steady state probabilities \cyu{in the form of $f(m)g(m+n)$}. With this new characterization, we then proceed to verify equations \eqref{eq:NB-st-1} to \eqref{eq:NB-st-4}.

According to Lemma \ref{lm:alternative-form}, we can cast $\pi_{m,n}$ as given in the statement of the proposition as 
\begin{align*}
\pi_{m,n}
=\left(\prod_{i=1}^{m+n}\frac{\lambda_1+\lambda_2}{\mu_1+\mu_2+i \theta_s}\right)\cdot 
\left(\prod_{i=1}^{m}\frac{(\mu_1+\mu_2\1{i>1}+(i-1)\theta_s)(1-\gamma_s)}{\mu_2+i \theta_s}\right) B,\quad \forall m,n\ge 0.
\end{align*}

Now note that in the discussion after Theorem \ref{thm-prod-one-sided} we observe that the only remaining pieces that need to be verified are:
\begin{enumerate}
    \item[\textbf{(a)}] Eq. \eqref{eq:NB-st-2}
    \item[\textbf{(b)}] Eq. \eqref{eq:NB-st-1}
\end{enumerate}

We begin with \textbf{(a)}. For ease of notation define,
\begin{align*}
p_i = \frac{\lambda_1+\lambda_2}{\mu_1+\mu_2+i \theta_s},\quad g(n)=\prod_{i=1}^np_i,
\quad \text{and} \quad
s_i =\frac{(\mu_1+\mu_2\1{i>1}+(i-1)\theta_s)(1-\gamma_s)}{\mu_2+i \theta_s},\quad
f(m)=\prod_{i=1}^m s_i,
\end{align*}
observe that the definition of $g(n)$ and $f(m)$ are the same as in the discussion after the statement of Theorem \ref{thm-prod-one-sided}.

Thus $\pi_{m,n}$ can then be summarized and re-written as
\begin{equation*}
    \pi_{m,n} = g(m+n)f(m)B.
\end{equation*}
 Eq. \eqref{eq:NB-st-2} then becomes
\begin{flalign*}
g(m)f(m)(\mu_2+\lambda_1+\lambda_2 + m\theta_s) &=g(m)f(m)p_{m+1}\theta_s
+ g(m)f(m)p_{m+1}s_{m+1}(\mu_2 + \theta_s(m + 1)) \\
&+\sum_{k=0}^mg(m)f(m-k)p_{m+1}\mu_1\gamma_s(1-\gamma_s)^k + \sum_{k=1}^{m}g(m)f(m-k)\mu_1(1-\gamma_s)^k.
\end{flalign*}

Dividing both sides of the equation by $g(m)$ this becomes,
\begin{flalign}
\nonumber
f(m)(\mu_2+\lambda_1+\lambda_2 + m\theta_s) &=f(m)p_{m+1}\theta_s
+ f(m)p_{m+1}s_{m+1}(\mu_2 + \theta_s(m + 1))\\ \nonumber
&+\sum_{k=0}^mf(m-k)p_{m+1}
\mu_1\gamma_s(1-\gamma_s)^{k} 
+\sum_{k=1}^{m}f(m-k)\mu_1(1-\gamma_s)^k\\ \nonumber
&=f(m)p_{m+1}(\theta_s+\mu_1\gamma_s+(\mu_1+\mu_2 + m\theta_s)(1-\gamma_s))\\ \label{eq:intermediate}
&+ \mu_1(\gamma_s p_{m+1}+1)\sum_{k=1}^mf(m-k)(1-\gamma_s)^{k}.
\end{flalign}

To ease the derivation, we use the following simplification result, which is formally proved in Lemma \ref{lm:key-step-induction},

\begin{equation}
\label{eq:simplification}
    \sum_{k=1}^m f(m-k)(1-\gamma_s)^{k} =
    \frac{f(m)(\mu_2+m\cdot\sr)}{\mu_1}. 
\end{equation}

Next, we simplify Eq. \eqref{eq:intermediate} using Eq. \eqref{eq:simplification}.
\begin{align*}
f(m)p_{m+1}(\theta_s+\mu_1\gamma_s+(\mu_1+\mu_2 + m\theta_s)(1-\gamma_s))\\
+ \mu_1(\gamma_s p_{m+1}+1)(1-\gamma_s)^m\sum_{k=1}^m f(m-k)(1-\gamma_s)^{k-m} 
=&f(m)p_{m+1}(\theta_s+\mu_1\gamma_s+(\mu_1+\mu_2 + m\theta_s)(1-\gamma_s))\\
+& \mu_1(\gamma_s p_{m+1}+1)(1-\gamma_s)^mf(m)\frac{\mu_2 + m\theta_s}{\mu_1(1-\gamma_s)^m}\\
=&f(m)\big(p_{m+1}(\mu_1+\mu_2+(m+1)\theta_s) - \gamma_sp_{m+1}(\mu_2 + m\theta_s))\big) \\
+&f(m)\big((\gamma_sp_{m+1}+1)(\mu_2 + m\theta_s)\big) \\
=&f(m)(\lambda_1 + \lambda_2 + \mu_2 + m\theta_s),
\end{align*}
thus we have verified Eq. \eqref{eq:NB-st-2}. 

To conclude, we move to  \textbf{(b)} and  verify Eq. \eqref{eq:NB-st-1}. This is equivalent to check that the following equation holds:
\begin{flalign*}
g(m+n)f(m)(\mu_1+\mu_2+\lambda_1+\lambda_2 + (m+n) \theta_s ) 
&=g(m+n)p_{m+n+1}f(m) ((n+1)\theta+\mu_1\gamma_s)\\
&+p_{m+n+1}S_{m+1}(\mu_2+(m+1)\theta_s)\\
&+g(m+n)p_{m+n+1}\gamma_s\underbrace{~\sum_{k=1}^m f(m-k)\mu_1(1-\gamma_s)^k}_{ f(m)(\mu_2+m\theta_s)\textrm{~by Eq. \eqref{eq:simplification}}} \\
  &+\frac{g(m+n)f(m)}{p_{m+n}}(\lambda_1+\lambda_2)
\end{flalign*}
Dividing both sides of the equation by $g(m+n)f(m)$, we get
\begin{align*}
(\mu_1+\mu_2+\lambda_1+\lambda_2 + (m+n) \theta_s ) 
&=p_{m+n+1} ((n+1)\theta_s+\mu_1\gamma_s)
+p_{m+n+1}s_{m+1}(\mu_2+(m+1)\theta_s)\\
&+p_{m+n+1}\gamma_s(\mu_2 + m\theta_s)
  +\frac{\lambda_1+\lambda_2}{p_{m+n}} \\
&=p_{m+n+1} ((n+1)\theta_s+\mu_1\gamma_s) + p_{m+n+1}(\mu_1 + \mu_2 +m\theta_s)(1-\gamma_s)\\
&+p_{m+n+1}\gamma_s(\mu_2 + m\theta_s)
  +\mu_1 +\mu_2 +(m+n)\theta_s \\
&=p_{m+n+1}((m+n+1)\theta_s + \mu_1 + \mu_2) + \mu_1 + \mu_2 + (m+n)\theta_s \\
&=\lambda_1 + \lambda_2 + \mu_1 + \mu_2 + (m+n)\theta_s,
\end{align*}
which completes the verification. $\qed$
\end{proof}

\begin{lemma}\label{lm:alternative-form}
The expressions
\begin{equation*}
    \left\{\begin{array}{ll}\left(\frac{\mu_1}{\mu_1 + \mu_2 + m\theta_s }\right)\left(\prod_{i=1}^m\frac{\lambda_2}{\mu_2 + i\theta_s}\right)\left(\prod_{i=1}^n\frac{\lambda_1+\lambda_2}{\mu_1 + \mu_2 + m\theta_s + i\theta_s}\right), & \quad m\ge1 \\[1em]
    \prod_{i=1}^n \frac{\lambda_1+\lambda_2}{\mu_1+\mu_2+i\theta_s}, & \quad m=0
    \end{array}\right.
\end{equation*}
and 
\begin{equation*}
    \left(\prod_{i=1}^{m+n}\frac{\lambda_1+\lambda_2}{\mu_1+\mu_2+i \theta_s}\right)\cdot 
\left(\prod_{i=1}^{m}\frac{(\mu_1+\mu_2\1{i>1}+(i-1)\theta_s)(1-\gamma_s)}{\mu_2+i \theta_s}\right),\quad \forall m,n\ge 0
\end{equation*}
are equivalent. 
\end{lemma}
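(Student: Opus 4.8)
\textbf{Proof plan for Lemma~\ref{lm:alternative-form}.}

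The plan is to show that the two expressions agree for every $(m,n)$ with $m,n\ge 0$ by separating the case $m=0$ from the case $m\ge 1$, and in the latter case by manipulating only the factors that depend on $m$ alone, since the factor $\left(\prod_{i=1}^{n}\frac{\lambda_1+\lambda_2}{\mu_1+\mu_2+m\theta_s+i\theta_s}\right)$ in the first expression and the ``tail'' of $\left(\prod_{i=1}^{m+n}\frac{\lambda_1+\lambda_2}{\mu_1+\mu_2+i\theta_s}\right)$ in the second expression will be made to coincide by an index shift.

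First, for $m=0$ both expressions reduce to $\prod_{i=1}^{n}\frac{\lambda_1+\lambda_2}{\mu_1+\mu_2+i\theta_s}$ immediately: the second expression has an empty product $\prod_{i=1}^{0}(\cdots)=1$, and $\prod_{i=1}^{0+n}=\prod_{i=1}^n$. So the $m=0$ case is trivial and I would dispense with it in one line.

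Next, for $m\ge 1$, I would split the product $\prod_{i=1}^{m+n}\frac{\lambda_1+\lambda_2}{\mu_1+\mu_2+i\theta_s}$ in the second expression as $\left(\prod_{i=1}^{m}\frac{\lambda_1+\lambda_2}{\mu_1+\mu_2+i\theta_s}\right)\left(\prod_{i=m+1}^{m+n}\frac{\lambda_1+\lambda_2}{\mu_1+\mu_2+i\theta_s}\right)$ and re-index the second factor by $j=i-m$ to get $\prod_{j=1}^{n}\frac{\lambda_1+\lambda_2}{\mu_1+\mu_2+m\theta_s+j\theta_s}$, which is exactly the $n$-dependent factor of the first expression. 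So it remains to check the purely $m$-dependent identity
\begin{equation*}
\left(\frac{\mu_1}{\mu_1+\mu_2+m\theta_s}\right)\left(\prod_{i=1}^{m}\frac{\lambda_2}{\mu_2+i\theta_s}\right)
=\left(\prod_{i=1}^{m}\frac{\lambda_1+\lambda_2}{\mu_1+\mu_2+i\theta_s}\right)\left(\prod_{i=1}^{m}\frac{(\mu_1+\mu_2\1{i>1}+(i-1)\theta_s)(1-\gamma_s)}{\mu_2+i\theta_s}\right).
\end{equation*}
Here I would cancel the common factor $\prod_{i=1}^m \frac{1}{\mu_2+i\theta_s}$ from both sides and use $\lambda_1+\lambda_2 = \lambda_1/\gamma_s$ together with $(\lambda_1+\lambda_2)(1-\gamma_s) = \lambda_2$ (from $\gamma_s = \lambda_1/(\lambda_1+\lambda_2)$), so that the right side becomes $\left(\prod_{i=1}^m \frac{\lambda_2}{\mu_1+\mu_2+i\theta_s}\right)\left(\prod_{i=1}^m (\mu_1+\mu_2\1{i>1}+(i-1)\theta_s)\right)/\prod_{i=1}^m(\mu_2+i\theta_s)$ — wait, more carefully: after cancelling $\prod\lambda_2/(\mu_2+i\theta_s)$-type factors the claim reduces to
\begin{equation*}
\frac{\mu_1}{\mu_1+\mu_2+m\theta_s} = \prod_{i=1}^{m}\frac{\mu_1+\mu_2\1{i>1}+(i-1)\theta_s}{\mu_1+\mu_2+i\theta_s}.
\end{equation*}
The product on the right telescopes: for $i\ge 2$ the numerator is $\mu_1+\mu_2+(i-1)\theta_s$, which cancels the denominator $\mu_1+\mu_2+(i-1)\theta_s$ of the $(i-1)$-th factor, while the $i=1$ numerator is just $\mu_1$ (since $\1{1>1}=0$). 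Hence the product collapses to $\frac{\mu_1}{\mu_1+\mu_2+m\theta_s}$, which is exactly the left side.

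The one genuinely delicate point — and the place I would be most careful — is the handling of the indicator $\1{i>1}$ in the numerator of the $f$-factor, because it is precisely this indicator that makes the product telescope rather than leave a spurious extra $\mu_1+\mu_2$ in the numerator; getting the bookkeeping of which term cancels which right (and checking the $m=1$ edge case, where the product is the single factor $\frac{\mu_1}{\mu_1+\mu_2+\theta_s}$) is the only thing that requires attention. Everything else is routine algebra with the substitutions $\gamma_s=\lambda_1/(\lambda_1+\lambda_2)$ and $1-\gamma_s=\lambda_2/(\lambda_1+\lambda_2)$.
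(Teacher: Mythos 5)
Your proposal is correct and follows essentially the same route as the paper's proof: both hinge on the substitution $(\lambda_1+\lambda_2)(1-\gamma_s)=\lambda_2$, the index shift that merges the two $(\lambda_1+\lambda_2)/(\mu_1+\mu_2+\cdot\,\theta_s)$ products into the single $(m+n)$-product, and the telescoping of $\prod_{i=1}^m\frac{\mu_1+\mu_2\1{i>1}+(i-1)\theta_s}{\mu_1+\mu_2+i\theta_s}$ down to $\frac{\mu_1}{\mu_1+\mu_2+m\theta_s}$. The only difference is cosmetic: you isolate the purely $m$-dependent identity and verify it separately, whereas the paper transforms $L_1$ into $L_2$ in one chain by inserting a product of ones.
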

\begin{proof}
Denote the first term in the statement $L_1$ and the second $L_2$.

For $m\ge1$,
\begin{align*}
L_1&=\bigg(\frac{\mu_1}{\mu_1 + \mu_2 + m\theta_s }\bigg)\cdot\bigg(\prod_{i=1}^m\frac{\lambda_2}{\mu_2 + i\theta_s}\bigg)\cdot\bigg(\prod_{i=1}^n\frac{\lambda_1+\lambda_2}{\mu_1 + \mu_2 + m\theta_s + i\theta_s}\bigg)  \\
&=\bigg(\frac{\mu_1}{\mu_1 + \mu_2 + m\theta_s }\bigg)\cdot\bigg(\prod_{i=1}^m\frac{(\lambda_1 + \lambda_2)(1-\gamma_s)}{\mu_2 + i\theta_s}\bigg) \cdot\bigg(\prod_{i=1}^n\frac{\lambda_1+\lambda_2}{\mu_1 + \mu_2 + m\theta_s + i\theta_s}\bigg)\cdot\underbrace{\bigg(\prod_{i=1}^m\frac{\mu_1+\mu_2 + i\theta_s}{\mu_1+\mu_2 + i\theta_s}\bigg)}_{=1} \\
&=\bigg(\frac{\mu_1}{\mu_1 + \mu_2 + m\theta_s }\bigg)\cdot \left(\prod_{i=1}^{m+n}\frac{\lambda_1+\lambda_2}{\mu_1+\mu_2+i \theta_s}\right)
\cdot \bigg(\prod_{i=1}^m\frac{(\mu_1+\mu_2 + i\theta_s)(1-\gamma_s)}{\mu_2 + i\theta_s}\bigg)
\\
&=\left(\prod_{i=1}^{m+n}\frac{\lambda_1+\lambda_2}{\mu_1+\mu_2+i \theta_s}\right)\cdot 
\left(\prod_{i=1}^{m}\frac{(\mu_1+\mu_2\1{i>1}+(i-1)\theta_s)(1-\gamma_s)}{\mu_2+i \theta_s}\right) \\
&=L_2.
\end{align*}

For $m=0$,
\begin{align*}
    L_1 = \prod_{i=1}^n\frac{\lambda_1+\lambda_2}{\mu_1+\mu_2+i\theta_s}=L_2.
\end{align*}

This completes the proof.
\end{proof}

\begin{lemma}\label{lm:key-step-induction}
Let $f:\mathbb{N}\rightarrow\mathbb{R}_+$ be a function such that $f(0)=1$ and 
 \begin{equation}\label{eq1:lem-form}
 \sum_{k=1}^{m}f(m-k)(1-\gamma_s)^k=f(m)\cdot (a+m\cdot b),\quad \forall m\ge 1.
 \end{equation}
 Then
  \begin{equation}\label{eq2:lem-form}
 f(m)=\frac{(1-\gamma_s)^m}{a+b}\left(\prod_{i=2}^m \frac{1+a+(i-1)b}{a+ib}\right),\quad \forall m\geq1.
 \end{equation}
\end{lemma}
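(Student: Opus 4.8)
The plan is to prove \eqref{eq2:lem-form} by strong induction on $m$, using \eqref{eq1:lem-form} to express $f(m)$ in terms of the earlier values $f(0),\dots,f(m-1)$, and in particular exploiting a telescoping relation between consecutive instances of \eqref{eq1:lem-form}. First I would establish the base case: taking $m=1$ in \eqref{eq1:lem-form} gives $f(0)(1-\gamma_s)=f(1)(a+b)$, and since $f(0)=1$ this yields $f(1)=(1-\gamma_s)/(a+b)$, which matches the empty-product convention in \eqref{eq2:lem-form}. For $m=2$ one similarly checks directly that $f(1)(1-\gamma_s)+f(0)(1-\gamma_s)^2 = f(2)(a+2b)$ reproduces the claimed formula, giving a second anchor point if needed.

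The key algebraic step, which I expect to be the heart of the argument, is to eliminate the summation. Write $S(m)=\sum_{k=1}^m f(m-k)(1-\gamma_s)^k$. Observe that
\[
S(m) = (1-\gamma_s)\,f(m-1) + (1-\gamma_s)\sum_{k=1}^{m-1} f(m-1-k)(1-\gamma_s)^k = (1-\gamma_s)\bigl(f(m-1) + S(m-1)\bigr).
\]
Substituting \eqref{eq1:lem-form} at indices $m$ and $m-1$, namely $S(m) = f(m)(a+mb)$ and $S(m-1) = f(m-1)(a+(m-1)b)$, gives the clean two-term recursion
\[
f(m)(a+mb) = (1-\gamma_s)\bigl(f(m-1) + f(m-1)(a+(m-1)b)\bigr) = (1-\gamma_s)\,f(m-1)\bigl(1 + a + (m-1)b\bigr),
\]
valid for all $m\ge 2$ (the step used $S(m-1)$, which requires $m-1\ge1$). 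Hence
\[
f(m) = f(m-1)\cdot\frac{(1-\gamma_s)\bigl(1+a+(m-1)b\bigr)}{a+mb},\qquad m\ge 2.
\]

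With this recursion in hand, the induction closes immediately: assuming $f(m-1)=\dfrac{(1-\gamma_s)^{m-1}}{a+b}\prod_{i=2}^{m-1}\dfrac{1+a+(i-1)b}{a+ib}$, multiplying by the factor above appends exactly the $i=m$ term to the product and raises the power of $(1-\gamma_s)$ by one, yielding \eqref{eq2:lem-form} for $m$. I would also note in passing that all denominators $a+ib$ are positive since $a,b>0$ (here $a=\mu_2/\mu_1$, $b=\theta_s/\mu_1$), so $f$ is well defined and positive, consistent with $f:\mathbb{N}\to\mathbb{R}_+$. The main obstacle is really just spotting the telescoping identity $S(m)=(1-\gamma_s)(f(m-1)+S(m-1))$ and then substituting \eqref{eq1:lem-form} on both sides to convert it into a first-order recursion for $f$; once that is done the rest is a routine induction.
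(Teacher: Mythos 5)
Your proposal is correct and follows essentially the same route as the paper's proof: the identity $S(m)=(1-\gamma_s)\bigl(f(m-1)+S(m-1)\bigr)$ combined with \eqref{eq1:lem-form} at indices $m$ and $m-1$ is exactly the paper's induction step, yielding the same first-order recursion $f(m)(a+mb)=(1-\gamma_s)f(m-1)\bigl(1+a+(m-1)b\bigr)$. No gaps.
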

\begin{proof}
We proceed by induction. For $m=1$ since $f(0)=1$ Eq.\eqref{eq1:lem-form} yields
\begin{equation}
f(1)(a+b)=(1-\gamma_s),
\end{equation}
which coincides with Eq. \eqref{eq2:lem-form}. For the induction step let us compute $f(m)$ assuming that the property holds form $f(m-1)$. From Eq. \eqref{eq2:lem-form} we have  
\begin{flalign*}
f(m)(a+m\cdot b)&=
\sum_{k=1}^{m}f(m-k)(1-\gamma_s)^k\\
&=\sum_{k=0}^{m-1}f(m-1-k)(1-\gamma_s)^{k+1}\\
&=f(m-1)(1-\gamma_s)+(1-\gamma_s)\sum_{k=1}^{m-1}f(m-1-k)(1-\gamma_s)^{k}\\
&=f(m-1)(1-\gamma_s)+(1-\gamma_s)f(m-1)(a+(m-1)\cdot b)\\
&=f(m-1)(1-\gamma_s)(1+a+(m-1)\cdot b)\\
&=\frac{(1-\gamma_s)^{m-1}}{a+b}\left(\prod_{i=2}^{m-1} \frac{1+a+(i-1)b}{a+ib}\right)(1-\gamma_s)(1+a+(m-1)\cdot b)\\
&=\frac{(1-\gamma_s)^m}{a+b}\left(\prod_{i=2}^m \frac{1+a+(i-1)b}{a+ib}\right)\cdot(a+m\cdot b),
\end{flalign*}
where the second to last equality comes from the induction hypothesis and Eq. \eqref{eq2:lem-form}. This concludes the proof. 
\end{proof}

\begin{proof}{\emph{of Theorem \ref{thm:two-sided}}.}
First we rewrite the expression for the conjectured steady state probabilities $\pi^L_{m,n}$ and $\pi^R_{m,n}$ into the form of $\pi_{m,n}=f(m)g(m+n)B$ as follows (see Lemma \ref{lm:alternative-form} for a formal proof),

\begin{align*}
\pi_{m,n}^L &= \left(\prod_{i=1}^{m+n}\frac{\lambda_1+\lambda_2}{\mu_1+\mu_2+i \theta_s}\right)
\left(\prod_{i=1}^{m}\frac{(\mu_1+\mu_2\1{i>1}+(i-1)\theta_s)(1-\gamma_s)}{\mu_2+i \theta_s}\right)B,\\
\pi_{m,n}^R &= \left(\prod_{i=1}^{m+n}\frac{\mu_1+\mu_2}{\lambda_1+\lambda_2+i \theta_d}\right)
\left(\prod_{i=1}^{m}\frac{(\lambda_2+\lambda_1\1{i>1}+(i-1)\theta_d)(1-\gamma_d)}{\lambda_1+i \theta_d}\right)B.
\end{align*}

For ease of notation, for $\ell\in \{L,R\}$ we define 
\begin{equation*}
P^\ell_{n}=\prod_{i=1}^n p^\ell_i,
\quad 
\text{where}\quad
p^L_i = \frac{\lambda_1+\lambda_2}{\mu_1+\mu_2+i \theta_s},\quad
p^R_i = \frac{\mu_1+\mu_2}{\lambda_1+\lambda_2+i \theta_d},\quad i\geq 1.
\end{equation*}
and 
\begin{equation*}
S^\ell_{m}=\prod_{i=1}^m s^\ell_i,
\end{equation*}
where
\begin{align*}
s^L_i =\frac{(\mu_1+\mu_2\1{i>1}+(i-1)\theta_s)(1-\gamma_s)}{\mu_2+i \theta_s},\quad s^R_i =\frac{(\lambda_2+\lambda_1\1{i>1}+(i-1)\theta_d)(1-\gamma_d)}{\lambda_1+i \theta_d},\quad i\geq 1.
\end{align*}
This gives the following forms of $\pi^L_{m,n}$ and $\pi^R_{m,n}$:
\begin{equation*}
\pi^L_{m,n} = P^L_{m+n}S^L_m,~~\pi^R_{m,n} = P^R_{m+n}S^R_m.\\
\end{equation*}
Now we verify the equilibrium equations. To do so we will make use of  Theorem \ref{thm-prod-one-sided}.
Observe that from Theorem \ref{thm-prod-one-sided}, equations \eqref{eq:NB2-st-1-L}, \eqref{eq:NB2-st-3-L}, \eqref{eq:NB2-st-1-R} and \eqref{eq:NB2-st-3-R} are readily verified.

Next we check Eq. \eqref{eq:NB2-st-2-L}. According to Theorem \ref{thm-prod-one-sided}, $\pi^L_{m,n}$ satisfies Eq. \eqref{eq:NB-st-2}. Adding $\pi^L_{m,0}\mu_1$ on both sides of Eq. \eqref{eq:NB-st-2}, we have
\begin{align*}
\pi^L_{m,0}(\mu_1 + \mu_2+\lambda_1+\lambda_2 + m\theta_s) &= \pi^L_{m,0}\mu_1 + \pi^L_{m,1}\theta_s
+ \pi^L_{m+1,0}\left(\mu_2 + \theta_s(m + 1)\right) \\
&+ \sum_{k=0}^m\pi^L_{m-k,k+1}\mu_1\gamma_s(1-\gamma_s)^k +\sum_{k=1}^{m}\pi^L_{m-k,k}\mu_1(1-\gamma_s)^k.
\end{align*}
In order to verify Eq. \eqref{eq:NB2-st-2-L}, we just need to show that
\begin{equation*}
    q_{m,1}(\lambda_1 + \theta_d) = \pi^L_{m,0}\mu_1 + \sum_{k=1}^{m}\pi^L_{m-k,k}\mu_1(1-\gamma_s)^k.
\end{equation*}
The left hand side $q_{m,1}(\lambda_1 + \theta_d) = \left(\prod_{i=1}^m\frac{\lambda_2}{\mu_2+i\theta_s}\right)\mu_1B$. For the right hand side,
\begin{align}
\nonumber
\pi^L_{m,0}\mu_1 + \sum_{k=1}^{m}\pi^L_{m-k,k}\mu_1(1-\gamma_s)^k 
&=P^L_m S^L_m\mu_1B + P^L_{m}B\underbrace{\sum_{k=1}^{m} S^L_{m-k}\mu_1(1-\gamma_s)^k}_{S^L_m(\mu_2 + m\theta_s)\textrm{~by Lemma \ref{lm:key-step-induction}}} \\ \nonumber
&=P_m^LS_m^LB(\mu_1+\mu_2+m\theta_s) \\ \nonumber
&=\left(\prod_{i=1}^m \frac{\lambda_1 + \lambda_2}{\mu_1 + \mu_2 + i\theta_s}\right)\left(\prod_{i=1}^m \frac{(\mu_1+\mu_2\1{i>1}+(i-1)\theta_s)(1-\gamma_s)}{\mu_2+i \theta_s}\right) \\ \nonumber
&\cdot(\mu_1+\mu_2+m\theta_s)B \\ \nonumber
&=\left(\prod_{i=1}^m \frac{\lambda_2}{\mu_2 + i\theta_s}\right)\left(\prod_{i=1}^m \frac{(\mu_1+\mu_2\1{i>1}+(i-1)\theta_s)}{\mu_1 + \mu_2+i \theta_s}\right) (\mu_1+\mu_2+m\theta_s)B \\ \label{eq:simplification2}
&=\left(\prod_{i=1}^m \frac{\lambda_2}{\mu_2 + i\theta_s}\right)\mu_1 B,
\end{align}
which completes the verification. Eq. \eqref{eq:NB2-st-2-R} can be verified with the same technique.\\

We now verify equations from \eqref{eq:NB2-st-1-M} to \eqref{eq:NB2-st-4-M}. We start with \eqref{eq:NB2-st-1-M}. For $i,j\geq 2$, dividing $q_{i,j}$ on both sides of Eq. \eqref{eq:NB2-st-1-M} we get the first equality of the following:
\begin{flalign*}
(i\theta_s + j\theta_d + \mu_1 + \mu_2 + \lambda_1 + \lambda_2 ) 
&= \frac{\lambda_2}{\mu_2+(i+1)\theta_s}((i+1)\theta_s + \mu_2) + 
\frac{{\mu_2+i\theta_s}}{\lambda_2}\lambda_2\\
&+ \frac{\mu_1}{\lambda_1+(j+1)\theta_d}((j+1)\theta_d + \lambda_1) + 
\frac{\lambda_1+j\theta_d}{\mu_1}\mu_1\\
&=\lambda_2 + \mu_2 + i\theta_s + \mu_1 + \lambda_1 + j\theta_d,
\end{flalign*}
which verifies Eq. \eqref{eq:NB2-st-1-M}. \\

For Eq. \eqref{eq:NB2-st-2-M}, replacing $\pi_{j-k,k}^R$ with $P_j^RS_{j-k}^R$ we have the first equality of the following,
\begin{flalign*}\nonumber
q_{1,j}(\theta_s + j\theta_d + \mu_1 + \mu_2 + \lambda_1 + \lambda_2) 
&= q_{2,j}(2\theta_s + \mu_2) + q_{1,j+1}((j+1)\theta_d + \lambda_1) + q_{1,j-1}\mu_1\\ 
&+ BP_j^RS_j^R\lambda_2 + BP^R_j\underbrace{\sum_{k=1}^jS^R_{j-k}\lambda_2(1-\gamma_d)^k}_{S^R_j(\lambda_1 + m\theta_d)\textrm{~by Lemma \ref{lm:key-step-induction}}} \\
&= q_{2,j}(2\theta_s + \mu_2) + q_{1,j+1}((j+1)\theta_d + \lambda_1) + q_{1,j-1}\mu_1\\ 
&+ BP_j^RS_j^R(\lambda_1+\lambda_2+m\theta_d) \\
&= q_{2,j}(2\theta_s + \mu_2) + q_{1,j+1}((j+1)\theta_d + \lambda_1) + q_{1,j-1}\mu_1\\ 
&+ B\lambda_2\left(\prod_{k=1}^j\frac{\mu_1}{\lambda_1 + k\theta_d}\right),\quad\textrm{by Eq. \eqref{eq:simplification2}}.
\end{flalign*}
Dividing both sides of the equation by $q_{1,j}$, we get
\begin{align*}
\theta_s + j\theta_d + \mu_1 + \mu_2 + \lambda_1 + \lambda_2 
&=\frac{\lambda_2}{\mu_2 + 2\theta_s}(2\theta_s + \mu_2) + \frac{\mu_1}{\lambda_1 + (j+1)\theta_d}((j+1)\theta_d + \lambda_1)  \\ 
&+ \frac{\lambda_1 + j\theta_d}{\mu_1}\mu_1 + \frac{\mu_2+\theta_s}{\lambda_2}\lambda_2 \\
&=\lambda_2 + \mu_1 + \lambda_1 + j\theta_d + \mu_2 + \theta_s.
\end{align*}
This verifies Eq. \eqref{eq:NB2-st-2-M}. By symmetry we can also conclude that \eqref{eq:NB2-st-3-M} is verified as well. \\

For Eq. \eqref{eq:NB2-st-4-M}, since $q_{2,1}(2\theta_s + \mu_2) = q_{1,1}\lambda_2, q_{1,2}(2\theta_d + \lambda_1) = q_{1,1}\mu_1$, we can cancel out $q_{1,1}(\mu_1+\lambda_2)$ on both sides of Eq. \eqref{eq:NB2-st-4-M}. This gives the following equation,
\begin{flalign*}
q_{1,1}(\theta_s + \theta_d  + \mu_2 + \lambda_1 )&= \pi^R_{1,0}\lambda_2 + \pi^R_{0,1}\lambda_2(1-\gamma_d)+ \pi^L_{1,0}\mu_1 + \pi^L_{0,1}\mu_1(1-\gamma_s).
\end{flalign*}
Replacing $q_{1,1}, \pi_{1,0}^R, \pi_{0,1}^R, \pi_{1,0}^L, \pi_{0,1}^L$ with their corresponding expressions, we get the first equality of the following,
\begin{align*}
\frac{\lambda_2}{\mu_2+\theta_s}\frac{\mu_1}{\lambda_1+\theta_d}(\theta_s + \theta_d  + \mu_2 + \lambda_1 ) 
&=\frac{\lambda_2(1-\gamma_d)}{\lambda_1+\theta_d}
\frac{\mu_1+\mu_2}{\lambda_1+\lambda_2+\theta_d}\lambda_2 + \frac{\mu_1+\mu_2}{\lambda_1+\lambda_2+\theta_d}\lambda_2(1-\gamma_d)\\
&+ \frac{\mu_1(1-\gamma_s)}{\mu_2+\theta_s}\frac{\lambda_1+\lambda_2}{\mu_1+\mu_2+\theta_s}\mu_1 + \frac{\lambda_1+\lambda_2}{\mu_1+\mu_2+\theta_s}\mu_1(1-\gamma_s) \\
&=\frac{\mu_1+\mu_2}{\lambda_1+\lambda_2+\theta_d}\lambda_2(1-\gamma_d)\frac{\lambda_1+\lambda_2+\theta_d}{\lambda_1+\theta_d} \\
&+\frac{\lambda_1+\lambda_2}{\mu_1+\mu_2+\theta_s}\mu_1(1-\gamma_s)\frac{\mu_1+\mu_2+\theta_s}{\mu_2+\theta_s} \\
&=\frac{\lambda_2\mu_1}{\lambda_1+\theta_d} + \frac{\lambda_2\mu_1}{\mu_2 + \theta_s}.
\end{align*}
This in turn verifies Eq. \eqref{eq:NB2-st-4-M}.\\

Finally, we verify Eq. \eqref{eq:NB2-st-4-L}. Replacing $\pi_{0,1}^L, \pi_{1,0}^L, \pi_{0,1}^R, \pi_{1,0}^R$ with their corresponding terms, we have the first equality of the following,
\begin{align*}
(\lambda_1+\lambda_2+\mu_1+\mu_2)
&=
\frac{\lambda_1+\lambda_2}{\mu_1+\mu_2+\theta_s}(\theta_s + \mu_1\gamma_s + \mu_2) + 
\frac{\lambda_1+\lambda_2}{\mu_1+\mu_2+ \theta_s}\frac{\mu_1(1-\gamma_s)}{\mu_2+\theta_s}(\mu_2 + \theta_s)\\
&+\frac{\mu_1+\mu_2}{\lambda_1+\lambda_2+\theta_d}
\frac{\lambda_2(1-\gamma_d)}{\lambda_1+ \theta_d}
(\theta_d+ \lambda_1)+
\frac{\mu_1+\mu_2}{\lambda_1+\lambda_2+\theta_d}(\theta_d + \lambda_1 + \lambda_2\gamma_d) \\
&=\frac{\lambda_1+\lambda_2}{\mu_1+\mu_2+\theta_s}(\mu_1+\mu_2+\theta_s) + \frac{\mu_1+\mu_2}{\lambda_1+\lambda_2+\theta_d}(\lambda_1 + \lambda_2 + \theta_d) \\
&=\lambda_1+\lambda_2+\mu_1+\mu_2,
\end{align*}
which verifies Eq. \eqref{eq:NB2-st-4-L}. This concludes the proof. $\qed$
\end{proof}

\bibliographystyle{plainnat}
\bibliography{references.bib}   %

\begin{thebibliography}{23}
\providecommand{\natexlab}[1]{#1}
\providecommand{\url}[1]{\texttt{#1}}
\expandafter\ifx\csname urlstyle\endcsname\relax
  \providecommand{\doi}[1]{doi: #1}\else
  \providecommand{\doi}{doi: \begingroup \urlstyle{rm}\Url}\fi

\bibitem[Adan and Weiss(2012)]{adan2012exact}
Ivo Adan and Gideon Weiss.
\newblock Exact {FCFS} matching rates for two infinite multitype sequences.
\newblock \emph{Operations Research}, 60\penalty0 (2):\penalty0 475--489, 2012.

\bibitem[Adan and Weiss(2014)]{adan2014skill}
Ivo Adan and Gideon Weiss.
\newblock A skill based parallel service system under {FCFS-ALIS}---steady
  state, overloads, and abandonments.
\newblock \emph{Stochastic Systems}, 4\penalty0 (1):\penalty0 250--299, 2014.

\bibitem[Adan et~al.(2009)Adan, Foley, and McDonald]{adan2009exact}
Ivo Adan, Robert~D Foley, and David~R McDonald.
\newblock Exact asymptotics for the stationary distribution of a markov chain:
  a production model.
\newblock \emph{Queueing Systems}, 62\penalty0 (4):\penalty0 311--344, 2009.

\bibitem[Adan et~al.(2017)Adan, Bu{\v{s}}i{\'c}, Mairesse, and
  Weiss]{adan2017reversibility}
Ivo Adan, Ana Bu{\v{s}}i{\'c}, Jean Mairesse, and Gideon Weiss.
\newblock Reversibility and further properties of {FCFS} infinite bipartite
  matching.
\newblock \emph{Mathematics of Operations Research}, 43\penalty0 (2):\penalty0
  598--621, 2017.

\bibitem[Adan et~al.(2018)Adan, Kleiner, Righter, and Weiss]{adan2018fcfs}
Ivo Adan, Igor Kleiner, Rhonda Righter, and Gideon Weiss.
\newblock {FCFS} parallel service systems and matching models.
\newblock \emph{Performance Evaluation}, 127:\penalty0 253--272, 2018.

\bibitem[Adan et~al.(2019)Adan, Boon, and Weiss]{adan2019design}
Ivo Adan, Marko~AA Boon, and Gideon Weiss.
\newblock Design heuristic for parallel many server systems.
\newblock \emph{European Journal of Operational Research}, 273\penalty0
  (1):\penalty0 259--277, 2019.

\bibitem[Af{\`e}che et~al.(2014)Af{\`e}che, Diamant, and
  Milner]{afeche2014double}
Philipp Af{\`e}che, Adam Diamant, and Joseph Milner.
\newblock Double-sided batch queues with abandonment: Modeling crossing
  networks.
\newblock \emph{Operations Research}, 62\penalty0 (5):\penalty0 1179--1201,
  2014.

\bibitem[Afeche et~al.(2019)Afeche, Caldentey, and Gupta]{afeche2019optimal}
Philipp Afeche, Rene Caldentey, and Varun Gupta.
\newblock On the optimal design of a bipartite matching queueing system.
\newblock \emph{Available at SSRN 3345302}, 2019.

\bibitem[Boxma et~al.(2011)Boxma, David, Perry, and Stadje]{boxma2011new}
Onno~J Boxma, Israel David, David Perry, and Wolfgang Stadje.
\newblock A new look at organ transplantation models and double matching
  queues.
\newblock \emph{Probability in the Engineering and Informational Sciences},
  25\penalty0 (2):\penalty0 135--155, 2011.

\bibitem[Caldentey and Kaplan(2007)]{caldentey2007heavy}
Ren{\'e}~A Caldentey and Edward~H Kaplan.
\newblock A heavy traffic approximation for queues with restricted
  customer-server matchings.
\newblock \emph{{\em Working paper}}, 2007.

\bibitem[Fazel-Zarandi and Kaplan(2018)]{fazel2018approximating}
Mohammad~M Fazel-Zarandi and Edward~H Kaplan.
\newblock Approximating the first-come, first-served stochastic matching model
  with ohm’s law.
\newblock \emph{Operations Research}, 66\penalty0 (5):\penalty0 1423--1432,
  2018.

\bibitem[Gardner and Righter(2020)]{gardner2020product}
Kristen Gardner and Rhonda Righter.
\newblock Product forms for fcfs queueing models with arbitrary server-job
  compatibilities: An overview.
\newblock \emph{arXiv preprint arXiv:2006.05979}, 2020.

\bibitem[Gardner et~al.(2016)Gardner, Zbarsky, Doroudi, Harchol-Balter,
  Hyyti{\"a}, and Scheller-Wolf]{gardner2016queueing}
Kristen Gardner, Samuel Zbarsky, Sherwin Doroudi, Mor Harchol-Balter, Esa
  Hyyti{\"a}, and Alan Scheller-Wolf.
\newblock Queueing with redundant requests: exact analysis.
\newblock \emph{Queueing Systems}, 83\penalty0 (3-4):\penalty0 227--259, 2016.

\bibitem[Green(1985)]{green1985queueing}
Linda Green.
\newblock A queueing system with general-use and limited-use servers.
\newblock \emph{Operations Research}, 33\penalty0 (1):\penalty0 168--182, 1985.

\bibitem[Jackson(1957)]{jackson1957networks}
James~R Jackson.
\newblock Networks of waiting lines.
\newblock \emph{Operations Research}, 5\penalty0 (4):\penalty0 518--521, 1957.

\bibitem[Jackson(1963)]{jackson1963jobshop}
James~R Jackson.
\newblock Jobshop-like queueing systems.
\newblock \emph{Management Science}, 10\penalty0 (1):\penalty0 131--142, 1963.

\bibitem[Kaplan(1988)]{kaplan1988public}
Edward~H Kaplan.
\newblock A public housing queue with reneging.
\newblock \emph{Decision Sciences}, 19\penalty0 (2):\penalty0 383--391, 1988.

\bibitem[Mairesse and Moyal(2016)]{mairesse2016stability}
Jean Mairesse and Pascal Moyal.
\newblock Stability of the stochastic matching model.
\newblock \emph{Journal of Applied Probability}, 53\penalty0 (4):\penalty0
  1064--1077, 2016.

\bibitem[Moyal et~al.(2017)Moyal, Busic, and Mairesse]{moyal2017product}
Pascal Moyal, Ana Busic, and Jean Mairesse.
\newblock A product form and a sub-additive theorem for the general stochastic
  matching model.
\newblock \emph{arXiv preprint arXiv:1711.02620}, 2017.

\bibitem[Talreja and Whitt(2008)]{talreja2008fluid}
Rishi Talreja and Ward Whitt.
\newblock Fluid models for overloaded multiclass many-server queueing systems
  with first-come, first-served routing.
\newblock \emph{Management Science}, 54\penalty0 (8):\penalty0 1513--1527,
  2008.

\bibitem[Visschers et~al.(2012)Visschers, Adan, and
  Weiss]{visschers2012product}
Jeremy Visschers, Ivo Adan, and Gideon Weiss.
\newblock A product form solution to a system with multi-type jobs and
  multi-type servers.
\newblock \emph{Queueing Systems}, 70\penalty0 (3):\penalty0 269--298, 2012.

\bibitem[Zenios(1999)]{zenios1999modeling}
Stefanos~A Zenios.
\newblock Modeling the transplant waiting list: A queueing model with reneging.
\newblock \emph{Queueing Systems}, 31\penalty0 (3-4):\penalty0 239--251, 1999.

\bibitem[Zhan and Weiss(2018)]{zhan2018many}
Dongyuan Zhan and Gideon Weiss.
\newblock Many-server scaling of the {N}-system under {FCFS--ALIS}.
\newblock \emph{Queueing Systems}, 88\penalty0 (1-2):\penalty0 27--71, 2018.

\end{thebibliography}

\end{document}